\documentclass[11pt]{amsart}

\usepackage{amsmath,amssymb,amsthm,mathtools}
\usepackage{mathrsfs}
\usepackage{geometry}
\usepackage[hidelinks]{hyperref}

\numberwithin{equation}{section}
\newtheorem{theorem}{Theorem}[section]
\newtheorem{proposition}[theorem]{Proposition}
\newtheorem{lemma}[theorem]{Lemma}
\newtheorem{corollary}[theorem]{Corollary}
\theoremstyle{definition}
\newtheorem{definition}[theorem]{Definition}
\newtheorem{example}[theorem]{Example}
\newtheorem{remark}[theorem]{Remark}

\DeclareMathOperator{\ad}{ad}
\newcommand{\g}{\mathfrak{g}}
\newcommand{\ee}{\mathfrak{e}}

\newcommand{\Lint}{L\!\int}
\newcommand{\Var}{\operatorname{Var}}
\newcommand{\Lamn}[2]{\Lambda_{#1}^{(#2)}}

\title{The Asymptotic Development of Paths on Nilmanifolds}
\author[Mac Lean]{Mark T. Mac Lean}
\address{Department of Mathematics, The University of British Columbia, Vancouver, BC}
\email{maclean@math.ubc.ca}

\keywords{Asymptotic homotopy, nilpotent $\pi_1$-de Rham theorem,
nilmanifolds, Chacon--Fomenko Lie integral, Carnot blow-down,
asymptotic development of paths}
\subjclass[2020]{Primary 53C17; Secondary 58A12, 22E25, 53C23, 57R19}

\begin{document}

\begin{abstract}
We prove a new Carnot-scale $n$-step nilpotent $\pi_1$-de Rham theorem: along every convergent scale sequence, the asymptotic homotopy class of a long trajectory is identified with the nilpotent development of its macroscopic horizontal path. The key analytic mechanism is an asymptotic-development theorem showing that uniform convergence of rescaled horizontal paths, together with a uniform bound on variation, forces uniform, layer-by-layer convergence of their full Carnot-rescaled nilpotent developments. The resulting theory recovers Schwartzman's theory of asymptotic cycles in step one and the $2$-step asymptotic homotopy theory of Benardete and Mitchell in step two, while extending the correspondence to graded nilpotent groups of arbitrary step.
\end{abstract}

\maketitle

\section{Introduction}
\label{sec:introduction}

What nonabelian geometric and topological information survives when a long trajectory is viewed at macroscopic scale? This question lies at a meeting point between dynamics, which studies the statistical behaviour of trajectories, and large-scale geometry and topology, which study the structure carried by those trajectories at large scale.

Manifolds with nilpotent fundamental group provide a natural setting for this question. In the abelian case, the large-scale winding of a trajectory is described by its homology class and, asymptotically, by its average displacement. In the nilpotent setting, however, trajectories can carry additional information in the successive commutator layers of the fundamental group, which emerge at distinct Carnot scales.

A nilpotent $\pi_1$-de Rham correspondence provides a language for this nonabelian information. Representations of $\pi_1(M)$ in a connected, simply connected nilpotent Lie group correspond to flat Lie-algebra-valued connection forms, and the Lie integral of such a form around a based loop is the logarithm of its holonomy. Because the exponential map is a global diffeomorphism in the nilpotent setting, passing to the logarithm loses no information. Rather, it places the holonomy in the Lie algebra, where its successive commutator layers can be isolated and studied under Carnot rescaling.

The goal of this paper is to pass this nilpotent $\pi_1$-de Rham correspondence to the macroscopic limit. More precisely, we ask whether the Carnot-rescaled homotopy class of a long trajectory is determined by its limiting horizontal motion, and what higher-order information survives in that limit.

The principal topological result is Theorem~\ref{thm:carnot-pi1-derham}, which gives a new Carnot-scale $n$-step nilpotent $\pi_1$-de Rham theorem identifying the asymptotic homotopy class of a long trajectory with the nilpotent development of its macroscopic horizontal path. The underlying perspective is supplied by Theorem~\ref{thm:macroscopic-path}: the asymptotic object is obtained by first taking the macroscopic horizontal path and then developing it nilpotently. Thus, the surviving nonabelian information is determined not merely by average displacement, but by the ordered geometry of the limiting horizontal path.

At the abelian level, classical de Rham theory detects the fundamental group only through its homology, and its asymptotic counterpart is Schwartzman’s theory of asymptotic cycles \cite{Schw57}. Benardete and Mitchell passed beyond homology by constructing an asymptotic homotopy theory in the $2$-step nilpotent setting \cite{BM93}. The present paper carries this progression through arbitrary nilpotent step. Although the $2$-step theory of Benardete and Mitchell is its natural precursor, to the author’s knowledge no counterpart of Theorem~\ref{thm:carnot-pi1-derham} has previously been available for nilpotent groups of arbitrary step.

The passage from $2$-step to general nilpotent Lie algebras is not formal. In the $2$-step case, the Baker--Campbell--Hausdorff (BCH) formula ends with a central commutator term, and the first nonabelian contribution is controlled by a single area-type correction. Beyond $2$-step, the logarithm separates into interacting homogeneous layers. Displacement, signed area, area moments, and higher nested-bracket terms must be controlled simultaneously. The central construction of this paper is the nilpotent development of paths. Through the Chacon--Fomenko Lie integral, a horizontal path in the first layer determines a path in the full nilpotent Lie algebra whose successive components retain this ordered geometric information \cite{CF91a}.

The analytic engine for this result is Theorem~\ref{thm:macroscopic-path}. It proves that convergence of rescaled horizontal paths, under a uniform bounded-variation hypothesis, forces layer-by-layer convergence of their full nilpotent developments. Theorem~\ref{thm:nilpotent-pi1-derham} establishes the finite-time $n$-step nilpotent $\pi_1$-de Rham correspondence. Theorem~\ref{thm:carnot-pi1-derham} combines these two results to pass the finite-time correspondence to the asymptotic cone.

More precisely, Theorem~\ref{thm:carnot-pi1-derham} gives the sequential Carnot-scale form of the correspondence. A long open trajectory is completed by a closing path whose developed logarithm remains bounded. If the rescaled horizontal paths converge along a sequence, with uniformly bounded variation, then the Carnot-rescaled logarithms of the resulting homotopy classes converge to the nilpotent development of the limiting macroscopic path. The limit is independent of the bounded closings. Thus, the limiting point in the asymptotic cone is not determined merely by average displacement: it retains the ordered higher-layer information generated by the limiting path.

The theory contains the preceding asymptotic theories as its first two stages. In step one, the fundamental group is seen only through its abelianization, and the construction reduces to the homological setting of Schwartzman’s asymptotic cycles. In step two, the first nonabelian layer recovers the Benardete--Mitchell correction. In higher step, genuinely new ordered interactions appear. The free $3$-step calculation shows explicitly why the $2$-step correction cannot simply be iterated.

The distinction between full-sequence and subsequential macroscopic limits is essential. Full-sequence limits are necessarily straight, so their higher Carnot-diagonal components vanish. Subsequence limits are much less rigid: every Lipschitz horizontal path based at the origin occurs along a suitable sequence for a bounded horizontal input. Their nilpotent developments may retain signed area and higher area-moment information at leading Carnot scale. The period array records this leading development on its diagonal and the successive subleading defects below it.

The remainder of this introduction gives the background for these results and fixes the conventions used throughout the paper.  Section~\ref{sec:nilpotent-development} develops horizontal BV paths in graded nilpotent groups.  Section~\ref{sec:macroscopic-development} proves the asymptotic-development theorem and the rigidity and realization results for macroscopic paths.  Section~\ref{sec:geometry-first-layers} interprets the first three homogeneous layers geometrically.  Section~\ref{sec:closed-trajectories} proves the finite-time and Carnot-scale nilpotent \(\pi_1\)-de Rham theorems.  Section~\ref{sec:path-defects} develops the period array, recovers the Benardete--Mitchell correction in step two, and exhibits the first free 3-step obstruction. The continuity estimates used in the asymptotic-development theorem are collected in Appendix~\ref{app:continuity}.
\subsection*{Acknowledgements:} The author would like to thank Tristan Collins for useful comments on an earlier draft of this paper. 

\subsection{Background}
\label{sec:background}
The asymptotic study of a long trajectory begins with displacement.  After a path segment of duration \(T\) has been closed by a uniformly bounded path, Schwartzman's asymptotic cycle records the limiting homology class after division by \(T\) \cite{Schw57}.  This is the correct abelian invariant, but it cannot distinguish two trajectories whose increments have the same average and occur in a different chronological order.  In a nilpotent target, that order is visible.  If a horizontal path first follows \(X\) and then \(Y\), its developed endpoint is \(\exp(Y)\exp(X)\); reversing the order gives \(\exp(X)\exp(Y)\).  The logarithms have the same first-layer displacement but opposite depth-two commutator terms.  At depth three they also retain information about how the commutator interacts with the two horizontal directions.

This observation suggests that the natural object is not an asymptotic group element, but instead is the long path itself, viewed at macroscopic scale, together with its nilpotent development.  Three related paths occur throughout the paper.  There is an original trajectory in a manifold, its horizontal primitive in the first layer of a nilpotent Lie algebra, and the corresponding group-valued developed path.  These three levels should not be conflated.  The original trajectory carries the topology of the ambient manifold, the horizontal primitive retains the chronological order of the increments in a linear space, and the developed path converts that ordered information into the noncommutative geometry of the nilpotent group.

Let
\[
  \g=V_1\oplus\cdots\oplus V_n
\]
be a graded nilpotent Lie algebra generated by \(V_1\), and let \(G\) be its connected, simply connected Lie group.  For a continuous horizontal path \(\ell:[0,\infty)\to V_1\) of bounded variation on compact intervals, write
\[
  H[t]=H_1[t]+\cdots+H_n[t]
\]
for the Chacon--Fomenko Lie integral \cite{CF91a} of \(\ell|_{[0,t]}\), and set \(U(t)=\exp H[t]\).  Thus, \(U\) is the nilpotent development of \(\ell\).  The construction is finite because the Lie algebra is nilpotent, so the logarithm is a finite Lie polynomial in the ordered increments of the path.

The path segment observed up to time \(T\) is placed on the unit interval by
\begin{equation}
  \ell_T(u)=\frac{\ell(Tu)-\ell(0)}{T},
  \qquad 0\leq u\leq1.
  \label{eq:intro-rescaled-path}
\end{equation}
This normalization retains the shape of the path, not only its endpoint average.  The algebraic foundation of the paper is the identity
\begin{equation}
  H\bigl[\ell_T|_{[0,u]}\bigr]
  =\delta_{1/T}H[Tu],
  \qquad 0\leq u\leq1,
  \label{eq:intro-exact-path-scaling}
\end{equation}
where \(\delta_r\) denotes the Carnot dilation.  Thus, path rescaling before development agrees exactly with Carnot dilation after development.

The asymptotic-development theorem, Theorem~\ref{thm:macroscopic-path}, combines this identity with continuity of the nilpotent development on uniformly bounded families of continuous BV paths.  Suppose that \(T_j\to\infty\), that \(\ell_{T_j}\) converges uniformly to a path \(\ell^\infty\), and that the variations of the rescaled paths are uniformly bounded.  Then the complete developed paths satisfy
\begin{equation}
  \delta_{1/T_j}U(T_j u)
  \longrightarrow
  U^\infty(u)
  :=\exp H\bigl[\ell^\infty|_{[0,u]}\bigr]
  \label{eq:intro-developed-path-limit}
\end{equation}
uniformly for \(0\leq u\leq1\).  In particular,
\[
  \delta_{1/T_j}U(T_j)
  \longrightarrow
  \exp H[\ell^\infty].
\]
The limiting point in the asymptotic cone is the endpoint of the developed macroscopic path.  The endpoint is a consequence of the path-level convergence rather than the primary object of the construction.

The limiting horizontal path may be non-straight when it is obtained along a subsequence.  Its higher homogeneous logarithmic components can then be nonzero and record chronological information that survives at Carnot scale.  This subsequential freedom is complete within the natural regularity class: every Lipschitz path in \(V_1\) based at the origin occurs as a macroscopic horizontal path along some sequence for a bounded measurable horizontal input.  By contrast, if \(\ell_T\) converges as \(T\to\infty\) without passage to a subsequence, the scaling relation among the family \(\ell_T\) forces the limit to be a straight segment.  The higher layers of its development vanish.  This distinction between subsequential realization and full-limit rigidity plays a central role.  The leading Carnot geometry is determined by the limiting macroscopic path; in the full-limit regime, the nonabelian information moves below the leading scale and appears in the manner by which the rescaled paths approach their limiting segment.

The first three homogeneous layers make the path interpretation concrete.  The first layer is horizontal displacement.  The second is signed area, with the sign determined by the Chacon--Fomenko Lie integral later--earlier ordering convention.  The third layer is a corrected logarithmic first moment of area.  Because of nilpotency, path development in the Heisenberg group ends with the area term, while in the Engel Lie group, it ends with the first area moment.  These examples illustrate what the asymptotic-development theorem says geometrically.

Topology enters after the open-path geometry has been understood.  A nilpotent representation of \(\pi_1(M)\) is realized by the holonomy of a flat Lie-algebra-valued development form.  The nilpotent \(\pi_1\)-de Rham correspondence identifies the Lie integral of a finite based loop with the logarithm of its holonomy.  An open long trajectory must be completed by a closing path before it represents a homotopy class.  A closing whose developed logarithm remains bounded disappears at Carnot scale, so the asymptotic-cone endpoint obtained from the macroscopic path is independent of the closing.  This gives the leading asymptotic \(\pi_1\)-de Rham statement proved in Theorem~\ref{thm:carnot-pi1-derham}.

The asymptotic cone does not retain every nonabelian feature of a long path.  Fix a scale sequence \(\mathbf T=(T_j)\) with \(T_j\to\infty\).  When successive limits exist along this sequence, one obtains expansions
\[
  H_k[T_j]=\sum_{m=1}^{k}T_j^m\Lamn{k}{m}(\mathbf T)+o(T_j).
\]
The triangular collection of coefficients \(\Lamn{k}{m}(\mathbf T)\) is the sequential Chacon--Fomenko period array along \(\mathbf T\).  Its diagonal is determined by the macroscopic path arising along that sequence and may be nonzero in higher layers when the limiting path is nonstraight.  In the full-limit regime, scaling rigidity forces the macroscopic path to be straight; for a bounded input with an ordinary mean, every higher diagonal entry then vanishes.  The entries below the diagonal measure successive defects in the convergence of the complete homogeneous components along the chosen sequence.  They are generally more sensitive than the Carnot limit.  In particular, bounded closings can affect them.  In depth two, Benardete and Mitchell identified the nonlinear correction that removes the first closing dependence and produces a homotopy invariant \cite{BM93}.  In depth three, nested BCH interactions create additional terms, and the free 3-step Lie algebra on three generators shows why no unmodified depth-two argument can suffice.

The principal contributions of this paper are as follows.  We introduce a Carnot-scale asymptotic theory for nilpotent Lie developments based on the Chacon--Fomenko Lie integral.  This yields a nilpotent analogue of the Benardete--Mitchell asymptotic homotopy invariant, establishes a Carnot-scale \(\pi_1\)-de Rham correspondence for flat nilpotent connections, and identifies the diagonal of the resulting period array with the Lie development of the macroscopic horizontal path.  We further show that, although full macroscopic limits are necessarily straight, every Lipschitz horizontal path based at the origin occurs as a subsequential macroscopic limit of a bounded horizontal input, revealing a fundamental distinction between full and sequential asymptotic behaviour.  Together, these results establish the foundations of a Carnot-scale asymptotic homotopy theory for nilpotent Lie groups.

\section{Nilpotent development of horizontal paths}
\label{sec:nilpotent-development}

The purpose of this section is to define the nilpotent development used throughout this paper and to isolate the features that make it compatible with macroscopic rescaling.  The Chacon--Fomenko construction is used with its chosen chronological convention: later increments multiply on the left and occupy the leftmost positions in the ordered bracket integrals.

\subsection{Graded nilpotent groups and Carnot dilations}
Let
\[
    \g=V_1\oplus\cdots\oplus V_n
\]
be a finite-dimensional graded \(n\)-step nilpotent Lie algebra:
\[
    [V_i,V_j]\subseteq V_{i+j},
\]
where \(V_k=\{0\}\) for \(k>n\).  We assume that \(\g\) is generated by
\(V_1\).  Thus, a bracket of \(k\) horizontal elements lies in \(V_k\).

For \(r>0\), the Carnot dilation \(\delta_r:\g\to\g\) is defined by
\[
    \delta_r(v_1+\cdots+v_n)
    =
    rv_1+r^2v_2+\cdots+r^nv_n,
    \qquad v_k\in V_k.
\]
Because the grading is compatible with the bracket,
\[
    \delta_r[x,y]=[\delta_rx,\delta_ry].
\]
Let \(G\) be the connected, simply connected Lie group with Lie algebra
\(\g\).  Since \(\exp:\g\to G\) is a global diffeomorphism, each
\(\delta_r\) integrates to a group automorphism, also denoted by
\(\delta_r\), characterized by
\[
    \delta_r(\exp X)=\exp(\delta_rX).
\]

Fix a norm \(\|\cdot\|\) on \(\g\), and use its restriction on
each graded subspace \(V_k\).  Since \(\g\) is finite dimensional and
the Lie bracket is continuous, there is a constant \(C\geq 1\) such that
\[
    \|[x,y]\|\leq C\|x\|\,\|y\|
\]
for all \(x,y\in\g\).

\subsection{The Carnot group and its asymptotic cone}
\label{sec:carnot-asymptotic-cone}

Let \(e\) denote the identity element of \(G\).  Equip \(V_1\) with the
restriction of the fixed norm and let \(d_{\mathrm{CC}}\) be the
corresponding left-invariant Carnot--Carath\'eodory distance on \(G\).
Because \(V_1\) generates \(\g\), this distance is finite, and the topology
it induces agrees with the manifold topology on \(G\).  The group
dilations satisfy
\begin{equation}
  d_{\mathrm{CC}}(\delta_rg,\delta_rh)
  =r\,d_{\mathrm{CC}}(g,h),
  \label{eq:CC-homogeneity}
\end{equation}
and so
\[
  \delta_{1/T}:
  \bigl(G,e,T^{-1}d_{\mathrm{CC}}\bigr)
  \longrightarrow
  \bigl(G,e,d_{\mathrm{CC}}\bigr)
\]
is a pointed isometry for every \(T>0\).  In this graded setting, the
Carnot asymptotic cone is represented canonically by the same
group \(G\); no limiting change of underlying group is required.

If
\[
  g_T=\exp H[T]\in G
\]
is the endpoint of a developed path, then its representative in the
blow-down at scale \(T\) is
\begin{equation}
  g_T^{\mathrm{cone}}
  :=\delta_{1/T}g_T
  =\exp\!\left(\delta_{1/T}H[T]\right)
  =\exp\!\left(\sum_{k=1}^n\frac{H_k[T]}{T^k}\right).
  \label{eq:cone-representative}
\end{equation}
Thus, the layerwise Carnot-normalized logarithm gives coordinates for a
point in the group-level asymptotic cone.

The ambient space here is the simply connected nilpotent group in which
the lifted paths and their developed group endpoints live.  If the original dynamics take place on a compact quotient \(G/\Gamma\),
equip \(G/\Gamma\) with any fixed Riemannian metric \(d_M\).  Since the
quotient is compact, it has finite diameter, and
\[
  \operatorname{diam}\bigl(G/\Gamma,T^{-1}d_M\bigr)
  =
  T^{-1}\operatorname{diam}\bigl(G/\Gamma,d_M\bigr)
  \longrightarrow0.
\]
Thus, the rescaled compact quotient itself collapses to a point.  The
nontrivial asymptotic information is obtained by lifting or developing
the long path into \(G\) before taking the Carnot blow-down.

\begin{remark}[The graded restriction]
For a simply connected nilpotent group that is not already graded, the
metric asymptotic cone is described by the associated graded Carnot group;
this is the setting of Pansu's asymptotic-cone theorem~\cite{Pan83}.  The
nilpotent \(\pi_1\)-de Rham correspondence in Section~\ref{sec:exact-pi1-derham} applies before such
a grading is chosen.  The asymptotic results of this paper are formulated
in the graded target itself, so that the cone and its dilations are
available without an additional passage to an associated graded group.
\end{remark}

\subsection{Path-ordered development and logarithmic coordinates}

Let \(K:[a,b]\to\g\) be an integrable Lie-algebra-valued input.  The group-valued equation
\begin{equation}
  U'(t)=K(t)U(t),\qquad U(a)=e,
  \label{eq:path-ordered-ode}
\end{equation}
has the path-ordered solution
\[
  U(b)=\mathcal P\exp\!\int_a^bK(t)\,dt.
\]
Along this solution, the pullback of the universal right Maurer--Cartan form is
\[
  U^*(dg\,g^{-1})=K(t)\,dt.
\]
Since the exponential map of a connected, simply connected nilpotent group is a global diffeomorphism, there is a unique logarithm
\[
  H[a,b]=\log U(b)\in\g.
\]
The Chacon--Fomenko Lie integral \cite{CF91a} is this logarithm, written
\[
  H[a,b]=\Lint_a^bK(t)\,dt.
\]
Passing to the logarithm does not approximate the development.  In a nilpotent target it expresses the developed endpoint as a finite Lie polynomial and places it in the graded vector space on which the Carnot dilations act.

If one path \(c_1\) is traversed first and a path \(c_2\) second, the chronological convention gives
\begin{equation}
  \exp H[c_2*c_1]=\exp H[c_2]\exp H[c_1],
  \qquad
  H[c_2*c_1]=\operatorname{BCH}(H[c_2],H[c_1]).
  \label{eq:CF-concatenation-BCH}
\end{equation}
The convention is the later factor appears on the left, which controls the signs in the area and area-moment calculations below.

\subsection{Continuous bounded-variation paths}

\begin{definition}
\label{def:BV-path}
A path \(\gamma:[a,b]\to\g\) has \emph{bounded variation} if
\[
    \Var(\gamma;[a,b])
    :=
    \sup_{\mathcal P}
    \sum_{j=1}^{m}
    \|\gamma(t_j)-\gamma(t_{j-1})\|
    <\infty,
\]
where the supremum is over all partitions
\(\mathcal P:a=t_0<t_1<\cdots<t_m=b\).  
\end{definition}

A path satisfying Definition~\ref{def:BV-path} will be called a
\emph{bounded-variation path}, or a \emph{BV path}. 

In this paper, the horizontal paths
take values in the graded subspace \(V_1\subseteq\g\), and we restrict ourselves to continuous BV paths.  This avoids any convention concerning
the order in which a jump is traversed and is the natural level of generality for the key asymptotic-development theorem, Theorem~\ref{thm:macroscopic-path}.  In particular, all absolutely continuous paths arising from time-dependent horizontal inputs are included.

As well, all integrals against a continuous BV path are
Riemann--Stieltjes integrals.  Equip
\(\operatorname{End}(\g)\) with the operator norm induced by the fixed
norm on \(\g\).  If
\(A:[a,b]\to\operatorname{End}(\g)\) is continuous, choose a tagged partition
\[
    \mathcal P:\quad a=t_0<t_1<\cdots<t_m=b,
    \qquad \xi_j\in[t_{j-1},t_j],
\]
and form the sums
\[
    \sum_{j=1}^m
    A(\xi_j)\bigl(\gamma(t_j)-\gamma(t_{j-1})\bigr).
\]
As the mesh of \(\mathcal P\) tends to zero, these sums converge in
\(\g\).  Their limit is denoted by
\[
    \int_a^b A(t)\,d\gamma(t).
\]
The integrands arising in the Chacon--Fomenko recursion are of this
form.  For example, if \(H:[a,b]\to\g\) is continuous, then
\[
    \int_a^b [H(t),d\gamma(t)]
    :=
    \int_a^b \ad_{H(t)}\,d\gamma(t),
\]
where \(\ad_{H(t)}(X)=[H(t),X]\).

Thus, the symbol \(d\gamma\) records the increments of the path in the
Riemann--Stieltjes sum.  If \(\gamma\) is absolutely continuous, then
\[
    \int_a^b A(t)\,d\gamma(t)
    =
    \int_a^b A(t)\gamma'(t)\,dt.
\]
We shall use the standard estimate
\[
    \left\|\int_a^bA(t)\,d\gamma(t)\right\|
    \leq
    \sup_{a\leq t\leq b}\|A(t)\|\,
    \Var(\gamma;[a,b]).
\]

\subsection{Ordered iterated integrals}

Following Chen's theory of iterated path integrals \cite{Chen77}, although
the eventual quantities are Lie-algebra valued, it is useful first to retain
the chronological ordering of the path increments in the tensor algebra
\[
  T(V_1)=\bigoplus_{q\geq0}V_1^{\otimes q}.
\]
Equip each tensor power \(V_1^{\otimes q}\) with the projective
tensor norm induced by the fixed norm on \(V_1\).

Let \(\gamma:[0,1]\to V_1\) be a continuous BV path.  For \(q\geq1\),
define its \(q\)-th ordered tensor integral over \([s,t]\) by
\begin{equation}
  I_q(\gamma)_{s,t}
  :=
  \int_{s<u_q<\cdots<u_1<t}
  d\gamma(u_1)\otimes\cdots\otimes d\gamma(u_q)
  \in V_1^{\otimes q},
  \label{eq:ordered-tensor-simplex}
\end{equation}
where the latest parameter occupies the leftmost tensor slot.  For
\(q=1\), this gives
\[
  I_1(\gamma)_{s,t}=\gamma(t)-\gamma(s).
\]
Equivalently, for \(q\geq2\),
\begin{equation}
  I_q(\gamma)_{s,t}
  =
  \int_s^t
  \Bigl(
    d\gamma(u)\otimes I_{q-1}(\gamma)_{s,u}
  \Bigr).
  \label{eq:ordered-tensor-recursion}
\end{equation}
Here the entire tensor
\(d\gamma(u)\otimes I_{q-1}(\gamma)_{s,u}\) is the integrand; in
particular, the upper endpoint \(u\) in the second factor varies with the
integration variable.  Since \(\gamma\) is continuous and of bounded
variation, these finite-dimensional Riemann--Stieltjes integrals are well
defined.

Set \(B_1(v)=v\).  For \(q\geq2\), define the left-nested bracket on
pure tensors by
\begin{equation}
  B_q(v_1\otimes\cdots\otimes v_q)
  :=
  [\cdots[[v_1,v_2],v_3],\ldots,v_q].
  \label{eq:left-nested-bracket-map}
\end{equation}
By multilinearity, these maps extend uniquely to linear maps
\[
  B_q:V_1^{\otimes q}\longrightarrow V_q.
\]
The Chacon--Fomenko bracket integral of order \(j\) is then
\begin{equation}
  T_j[\gamma]
  :=
  B_{j+1}\bigl(I_{j+1}(\gamma)_{0,1}\bigr).
  \label{eq:Tj-from-tensor-integral}
\end{equation}
Thus, the ordered tensor integral retains the chronological tensor data,
while \(B_{j+1}\) extracts the left-nested bracket contribution relevant
to the construction.  In expanded form,
\begin{equation}
  T_j[\gamma]
  =
  \int_{0<u_{j+1}<\cdots<u_1<1}
  [\cdots[[d\gamma(u_1),d\gamma(u_2)],d\gamma(u_3)],
    \ldots,d\gamma(u_{j+1})].
  \label{eq:Tj-expanded-bracket-integral}
\end{equation}
In particular,
\[
  T_0[\gamma]=\gamma(1)-\gamma(0).
\]

The ordering is the Chacon--Fomenko convention: later increments occupy the leftmost slots
and all brackets are left-nested.  In particular,
\begin{equation}
  T_1[\gamma]
  =
  \int_{0<u_2<u_1<1}[d\gamma(u_1),d\gamma(u_2)],
  \label{eq:T1-ordered-bracket-integral}
\end{equation}
which is compatible with the product ordering for \(U'(t)=K(t)U(t)\).

\subsection{The Chacon--Fomenko Lie integral of a BV path}

The Chacon--Fomenko Lie integral may be defined for a continuous BV
horizontal path by the same finite recursion used for an integrable
horizontal input.

Let \(b_{2p}\) be the Bernoulli numbers and put
\[
    k_{2p}=\frac{b_{2p}}{(2p)!}.
\]

\begin{definition}
For a continuous BV path \(\gamma:[0,1]\to V_1\), define
\[
    H[\gamma]
    =
    H_1[\gamma]+\cdots+H_n[\gamma],
    \qquad H_k[\gamma]\in V_k,
\]
recursively as follows:
\[
    H_1[\gamma]=T_0[\gamma],
\]
and, for \(k\geq 1\),
\begin{align}
    (k+1)H_{k+1}[\gamma]
    ={}&
    T_k[\gamma]
    +
    \sum_{r=1}^{k}
    \Bigg\{
       \frac12[H_r[\gamma],T_{k-r}[\gamma]]
       \notag\\
       &\quad+
       \sum_{\substack{p\geq 1\\2p\leq r}}
       k_{2p}
       \sum_{\substack{m_i>0\\m_1+\cdots+m_{2p}=r}}
       [H_{m_1}[\gamma],
          [\cdots[
             H_{m_{2p}}[\gamma],
             T_{k-r}[\gamma]
          ]\cdots]]
    \Bigg\}.
    \label{eq:CF-recursion}
\end{align}
Terms of degree greater than \(n\) vanish because \(\g\) is
\(n\)-step nilpotent.  The resulting finite sum \(H[\gamma]\) is the
\emph{Chacon--Fomenko Lie integral} of \(\gamma\), denoted also by
\[
    \Lint_\gamma d\gamma.
\]
\end{definition}

When
\[
    \gamma(t)=\int_0^tK(s)\,ds
\]
is absolutely continuous, \(d\gamma=K(t)\,dt\), and this definition
agrees with the usual Chacon--Fomenko Lie integral
\[
    \Lint_0^1K(t)\,dt.
\]

Every term in \(H_k[\gamma]\) is homogeneous of degree \(k\) in the
increments of \(\gamma\).  Equivalently,
\[
    H_k[r\gamma]=r^kH_k[\gamma]
\]
for every scalar \(r\).

\begin{remark}[Chronological ordering and the area sign]
Three orderings must remain consistent.  The partition points are indexed chronologically, the product exponential places later factors on the left, and the simplex variables satisfy \(u_1\geq u_2\geq\cdots\), with the latest increment in the leftmost bracket slot.  Thus, the depth-two component in the convention used here is the negative of the L\'evy area in the standard probabilistic convention, which orders the bracket as earlier--later.  This is a convention of orientation and bracket order.
\end{remark}

Because every term in \(H_k[\gamma]\) is homogeneous of degree \(k\) in the increments of \(\gamma\), the path functional satisfies
\begin{equation}
  H_k[r\gamma]=r^kH_k[\gamma].
  \label{eq:homogeneity-path-functional}
\end{equation}
Nilpotence makes the construction finite.  In an \(n\)-step target, every bracket of length greater than \(n\) vanishes, and the Lie integral terminates exactly at \(H_n\).  There is no truncation error and no convergence issue for the Lie series itself.  The limiting questions in this paper concern the long-time behaviour of a finite collection of homogeneous components.

\section{Macroscopic development of long paths}
\label{sec:macroscopic-development}

We now compare long path segments on a common parameter interval.  The key point is that the rescaling of the horizontal path and the Carnot dilation of its development agree exactly.  The only analytic input is continuity of the finite nilpotent development under uniform convergence with uniform variation control.  After proving the asymptotic-development theorem, we characterize the subsequential macroscopic paths generated by bounded inputs and contrast their freedom with the rigidity of full limits.

\subsection{Continuity of developed paths}

For a continuous horizontal BV path \(\gamma:[0,1]\to V_1\), define its developed path by
\begin{equation}
  U_\gamma(u)=\exp H\bigl[\gamma|_{[0,u]}\bigr],
  \qquad 0\leq u\leq1.
  \label{eq:developed-BV-path}
\end{equation}
The next proposition is proved in Appendix~\ref{app:continuity}.  Its path-level formulation is the form needed for the asymptotic-development theorem, Theorem~\ref{thm:macroscopic-path}.

\begin{proposition}[Uniform continuity of nilpotent development]
\label{prop:pathwise-CF-continuity}
Let \(\gamma_j,\gamma:[0,1]\to V_1\) be continuous BV paths such that \(\gamma_j\to\gamma\) uniformly and
\[
  \sup_j\Var(\gamma_j;[0,1])<\infty.
\]
Then \(\gamma\) is a BV path and, for every \(1\leq k\leq n\),
\[
  \sup_{0\leq u\leq1}
  \left\|
    H_k\bigl[\gamma_j|_{[0,u]}\bigr]
    -H_k\bigl[\gamma|_{[0,u]}\bigr]
  \right\|
  \longrightarrow0.
\]
Consequently, \(U_{\gamma_j}\to U_\gamma\) uniformly on \([0,1]\) in the topology of \(G\).
\end{proposition}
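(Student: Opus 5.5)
Since \(H_k\) is a fixed finite Lie polynomial in the bracket integrals \(T_0,\dots,T_{k-1}\), built by the recursion \eqref{eq:CF-recursion}, it suffices to show that, for each \(q\geq1\), the ordered tensor integrals \(I_q(\gamma_j)_{0,u}\) converge to \(I_q(\gamma)_{0,u}\) uniformly in \(u\in[0,1]\), and that these integrals stay uniformly bounded. Indeed, restriction to \([0,u]\) commutes with the construction: \(T_m[\gamma|_{[0,u]}]=B_{m+1}(I_{m+1}(\gamma)_{0,u})\), and each \(B_q\) is a bounded linear map. The recursion then expresses \(H_{k}[\gamma|_{[0,u]}]\) as a finite sum of brackets of the \(T\)'s, with constant coefficients. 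By the bound \(\|[x,y]\|\le C\|x\|\|y\|\), products of uniformly bounded, uniformly convergent families converge uniformly. Induction on \(k\) then gives the claim for every \(H_k\).

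The first step is lower semicontinuity of variation under pointwise convergence. For any fixed partition, \(\sum\|\gamma(t_i)-\gamma(t_{i-1})\|=\lim_j\sum\|\gamma_j(t_i)-\gamma_j(t_{i-1})\|\le M:=\sup_j\Var(\gamma_j)\). Hence \(\gamma\) is BV with \(\Var(\gamma)\le M\), and \(\gamma\) is continuous as a uniform limit. The uniform bound \(\|I_q(\gamma_j)_{0,u}\|\le M^q/q!\) follows by induction from \eqref{eq:ordered-tensor-recursion}, using the projective-norm estimate \(\|v\otimes w\|\le\|v\|\|w\|\) together with the standard Stieltjes bound.

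The core step, and the main obstacle, is the inductive convergence of \(I_q\). Stieltjes integrals against \(d\gamma_j\) are not continuous in \(\gamma_j\) under uniform convergence alone, so the variation bound must be used. I would write
\[
I_q(\gamma_j)_{0,u}-I_q(\gamma)_{0,u}
=\int_0^u d\gamma_j(s)\otimes\bigl(I_{q-1}(\gamma_j)_{0,s}-I_{q-1}(\gamma)_{0,s}\bigr)
+\int_0^u \bigl(d\gamma_j-d\gamma\bigr)(s)\otimes I_{q-1}(\gamma)_{0,s}.
\]
The first term is at most \(M\sup_s\|I_{q-1}(\gamma_j)-I_{q-1}(\gamma)\|\), which tends to zero by the inductive hypothesis. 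For the second term, the integrand \(F(s)=I_{q-1}(\gamma)_{0,s}\) is a fixed continuous function, but acting on the left slot it is an element of \(\operatorname{End}\)-type form, \(x\mapsto x\otimes F(s)\). I would approximate \(F\) uniformly within \(\varepsilon\) by a step function \(F_\varepsilon\) on a fixed partition, which is possible by uniform continuity on \([0,1]\). This costs at most \(\varepsilon(M+\Var(\gamma))\le2M\varepsilon\), uniformly in \(u\) and \(j\). Against the step function, the integral is a finite sum of increments \((\gamma_j-\gamma)(t_i\wedge u)-(\gamma_j-\gamma)(t_{i-1}\wedge u)\) tensored with fixed vectors, so it is bounded by \(2N\|\gamma_j-\gamma\|_\infty\sup\|F\|\), which tends to zero uniformly in \(u\). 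Alternatively, one could integrate by parts, since \(F\) is itself BV, but the step-function route avoids any issues with jump conventions. Taking \(\limsup_j\) and then letting \(\varepsilon\to0\) finishes the induction; the case \(q=1\) is immediate because \(I_1(\gamma)_{0,u}=\gamma(u)-\gamma(0)\).

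Finally, the group statement follows from the fact that \(\exp:\g\to G\) is continuous, hence uniformly continuous on the compact set containing all \(H[\gamma_j|_{[0,u]}]\) and \(H[\gamma|_{[0,u]}]\), which is bounded by the uniform estimates above. Therefore \(U_{\gamma_j}\to U_\gamma\) uniformly on \([0,1]\).
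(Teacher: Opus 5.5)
Your proof is correct and follows essentially the same route as the paper's argument in Appendix~\ref{app:continuity}: lower semicontinuity of variation for the limit path, the identical two-term splitting of \(I_q(\gamma_j)_{0,u}-I_q(\gamma)_{0,u}\) in an induction on \(q\), and then the recursion~\eqref{eq:CF-recursion} together with continuity of \(\exp\) on a compact set. The only local difference is the term against \(d(\gamma_j-\gamma)\): you handle it by uniform step-function approximation of \(I_{q-1}(\gamma)_{0,\cdot}\), whereas the paper integrates by parts and uses the variation bound on \(I_{q-1}(\gamma)\) to get the explicit estimate \(2\|\gamma_j-\gamma\|_\infty M^{q-1}/(q-1)!\); separately, the crude sup-times-variation bound gives only \(\|I_q\|\le M^q\) rather than \(M^q/q!\) (the factorial needs integration against the variation function), but boundedness is all you use.
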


\subsection{Exact scaling of the developed path}

We use the same path functional on any compact interval.  It depends only on path increments and is invariant under increasing reparameterization.

Let \(\ell:[0,\infty)\to V_1\) be continuous and of bounded variation on compact intervals.  Put
\[
  H[t]=H\bigl[\ell|_{[0,t]}\bigr],
  \qquad
  U(t)=\exp H[t].
\]
For \(T>0\), define the rescaled path by \eqref{eq:intro-rescaled-path}.

\begin{proposition}[Exact path-scaling identity]
\label{prop:exact-path-scaling}
For every \(T>0\), every \(0\leq u\leq1\), and every \(1\leq k\leq n\),
\begin{equation}
  H_k\bigl[\ell_T|_{[0,u]}\bigr]
  =\frac{H_k[Tu]}{T^k}.
  \label{eq:exact-path-scaling-components}
\end{equation}
Equivalently,
\begin{equation}
  H\bigl[\ell_T|_{[0,u]}\bigr]
  =\delta_{1/T}H[Tu],
  \qquad
  U_{\ell_T}(u)=\delta_{1/T}U(Tu).
  \label{eq:exact-path-scaling-group}
\end{equation}
\end{proposition}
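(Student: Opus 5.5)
The proof combines two structural facts about the path functional \(H\): it depends only on the increments of the path and is unchanged by increasing reparameterization, and its \(k\)-th component is homogeneous of degree \(k\), as in \eqref{eq:homogeneity-path-functional}. Write \(\ell_T|_{[0,u]}\) as a composite. Let \(\sigma:[0,u]\to[0,Tu]\), \(\sigma(s)=Ts\), be the increasing linear reparameterization. Then
\[
  \ell_T|_{[0,u]}=\tfrac1T\bigl(\ell\circ\sigma-\ell(0)\bigr),
\]
that is, the path \(\ell|_{[0,Tu]}\) reparameterized by \(\sigma\), translated by the constant \(-\ell(0)\), and scaled by \(1/T\).

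The plan has three steps, one for each operation.

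First, translation by a constant leaves every Riemann--Stieltjes increment \(d\gamma\) unchanged. It therefore leaves every ordered tensor integral \(I_q\) unchanged, hence every \(T_j\), and by the recursion \eqref{eq:CF-recursion} every \(H_k\).

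Second, reparameterization invariance. Under an increasing bijective change of variable \(\sigma\), a tagged partition of \([0,u]\) maps to a tagged partition of \([0,Tu]\), with mesh multiplied by \(T\). The Riemann--Stieltjes sums therefore correspond term by term. By induction on \(q\) through the recursion \eqref{eq:ordered-tensor-recursion}, this gives
\[
  I_q(\ell\circ\sigma)_{0,u}=I_q(\ell)_{0,Tu}.
\]
The induction must carry the variable upper endpoint: the inner integrand \(I_{q-1}(\ell\circ\sigma)_{0,s}\) equals \(I_{q-1}(\ell)_{0,\sigma(s)}\) for every \(s\). Applying \(B_{j+1}\) gives the equality of the \(T_j\) on the two intervals. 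Since \eqref{eq:CF-recursion} is a fixed polynomial in the \(T_j\), an induction on \(k\) gives
\[
  H_k\bigl[(\ell\circ\sigma)|_{[0,u]}\bigr]=H_k\bigl[\ell|_{[0,Tu]}\bigr]=H_k[Tu].
\]
The recursion is stated on \([0,1]\); I will either note that it makes sense verbatim on any compact interval, or first normalize both paths to \([0,1]\) by linear maps and reduce to the same invariance.

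Third, scaling by \(r=1/T\). Homogeneity \eqref{eq:homogeneity-path-functional} gives \(H_k[r\gamma]=r^kH_k[\gamma]\). This yields \eqref{eq:exact-path-scaling-components}.

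Summing over \(k\) with the definition of \(\delta_{1/T}\) gives the Lie-algebra identity in \eqref{eq:exact-path-scaling-group}. The group identity follows from \(\delta_r(\exp X)=\exp(\delta_rX)\) and the definition \eqref{eq:developed-BV-path} of \(U_{\ell_T}\). The degenerate case \(u=0\) is trivial, since both sides vanish.

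The only point needing real care is the reparameterization step. One must confirm that the nested Riemann--Stieltjes integrals, including the dependence of the inner integrand on the moving endpoint, transform correctly. A second check is that the homogeneity claim is justified: each \(T_j\) is \((j+1)\)-linear in the increments, and the recursion preserves total degree, with \(r\) entering to the power \(m_1+\cdots+m_{2p}+(k-r)+1=k+1\) in each term. Both reduce to routine inductions, so I expect no genuine obstacle.
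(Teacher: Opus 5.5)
Your proposal is correct and follows the paper's own argument. The paper also writes \(\ell_T|_{[0,u]}\) as a translated, linearly reparameterized, \(1/T\)-scaled copy of \(\ell|_{[0,Tu]}\), uses increment-dependence, invariance under increasing reparameterization and degree-\(k\) homogeneity to get the componentwise identity, then sums and applies \(\delta_r(\exp X)=\exp(\delta_rX)\). Your proposal goes further only in that it checks reparameterization invariance through the tensor recursion, including the moving upper endpoint, and verifies the degree count in the recursion, both of which the paper simply asserts.
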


\begin{proof}
For \(u\in[0,1]\),
\[
  \ell_T(u)=\frac{\ell(Tu)-\ell(0)}{T}.
\]
Since the Lie integral depends only on path increments and is invariant under increasing reparameterization,
\[
  H_k\bigl[\ell_T|_{[0,u]}\bigr]
  =\frac1{T^k}H_k\bigl[\ell|_{[0,Tu]}\bigr]
  =\frac{H_k[Tu]}{T^k}.
\]
Summing the homogeneous components and using \(\delta_r(\exp X)=\exp(\delta_rX)\) gives \eqref{eq:exact-path-scaling-group}.
\end{proof}

\subsection{The asymptotic-development theorem}

\begin{definition}[Macroscopic horizontal path]
\label{def:macroscopic-horizontal-path}
Let \(T_j\to\infty\).  A continuous BV path \(\ell^\infty:[0,1]\to V_1\) is a \emph{macroscopic horizontal path along \((T_j)\)} if \(\ell_{T_j}\to\ell^\infty\) uniformly and
\[
  \sup_j\Var(\ell_{T_j};[0,1])<\infty.
\]
Its nilpotent development
\[
  U^\infty(u)=\exp H\bigl[\ell^\infty|_{[0,u]}\bigr]
\]
is the \emph{developed macroscopic path}.  If the same convergence and variation bound hold for the full family as \(T\to\infty\), then \(\ell^\infty\) is called the \emph{full macroscopic horizontal path}.
\end{definition}

\begin{theorem}[Asymptotic development of paths]
\label{thm:macroscopic-path}
Let \(\ell:[0,\infty)\to V_1\) be continuous and of bounded variation on compact intervals, and let \(T_j\to\infty\).  Suppose that \(\ell^\infty\) is a macroscopic horizontal path along \((T_j)\).  Then
\begin{equation}
  \delta_{1/T_j}U(T_j u)
  \longrightarrow
  U^\infty(u)
  \label{eq:uniform-developed-path-limit}
\end{equation}
uniformly for \(0\leq u\leq1\).  Equivalently, for every homogeneous layer,
\begin{equation}
  \sup_{0\leq u\leq1}
  \left\|
    \frac{H_k[T_j u]}{T_j^k}
    -H_k\bigl[\ell^\infty|_{[0,u]}\bigr]
  \right\|
  \longrightarrow0.
  \label{eq:uniform-homogeneous-path-limit}
\end{equation}
In particular,
\begin{equation}
  \delta_{1/T_j}H[T_j]\longrightarrow H[\ell^\infty],
  \qquad
  \delta_{1/T_j}U(T_j)\longrightarrow\exp H[\ell^\infty].
  \label{eq:endpoint-asymptotic-development}
\end{equation}
The final group element is the endpoint of the developed macroscopic path and represents the limiting point of the developed endpoints in the Carnot asymptotic cone along \((T_j)\).
\end{theorem}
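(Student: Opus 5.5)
The proof combines the exact path-scaling identity, Proposition~\ref{prop:exact-path-scaling}, with the uniform continuity of nilpotent development, Proposition~\ref{prop:pathwise-CF-continuity}. Put \(\gamma_j:=\ell_{T_j}\) and \(\gamma:=\ell^\infty\). By Definition~\ref{def:macroscopic-horizontal-path}, the \(\gamma_j\) are continuous BV paths with values in \(V_1\), they converge uniformly to \(\gamma\), and \(\sup_j\Var(\gamma_j;[0,1])<\infty\). Each \(\gamma_j\) is continuous and BV because \(\ell\) has these properties on \([0,T_j]\) and the affine rescaling \eqref{eq:intro-rescaled-path} preserves them, with \(\Var(\ell_{T_j};[0,1])=T_j^{-1}\Var(\ell;[0,T_j])\). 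Thus the hypotheses of Proposition~\ref{prop:pathwise-CF-continuity} hold verbatim.

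The first step is to apply Proposition~\ref{prop:pathwise-CF-continuity}. For every \(1\le k\le n\) it gives
\[
  \sup_{0\le u\le 1}\bigl\|H_k[\gamma_j|_{[0,u]}]-H_k[\gamma|_{[0,u]}]\bigr\|\to0 .
\]
The second step is to rewrite the first term using \eqref{eq:exact-path-scaling-components}, which states that \(H_k[\ell_{T_j}|_{[0,u]}]=H_k[T_ju]/T_j^k\) for every \(u\in[0,1]\). Substituting this yields \eqref{eq:uniform-homogeneous-path-limit} directly. Summing over the finitely many layers then gives \(\delta_{1/T_j}H[T_ju]\to H[\ell^\infty|_{[0,u]}]\) uniformly in \(\mathfrak g\).

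The third step passes to the group. Because \(\exp:\mathfrak g\to G\) is continuous, it is uniformly continuous on compact sets. The limit path \(u\mapsto H[\ell^\infty|_{[0,u]}]\) is continuous on \([0,1]\), so its image is compact, and for large \(j\) all the approximants lie in a fixed compact neighbourhood of that image. Hence uniform convergence of logarithms implies uniform convergence of \(\exp\) of them. The second identity in \eqref{eq:exact-path-scaling-group} identifies \(\exp(\delta_{1/T_j}H[T_ju])\) with \(\delta_{1/T_j}U(T_ju)\), and this gives \eqref{eq:uniform-developed-path-limit}. Equivalently, the last sentence of Proposition~\ref{prop:pathwise-CF-continuity} can be invoked directly. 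Specializing to \(u=1\) gives \eqref{eq:endpoint-asymptotic-development}. The cone interpretation then follows from \eqref{eq:cone-representative} and the pointed isometry \(\delta_{1/T}\) of Section~\ref{sec:carnot-asymptotic-cone}.

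There is essentially no obstacle beyond what Proposition~\ref{prop:pathwise-CF-continuity} already absorbs: the analytic difficulty lies in controlling the iterated Riemann--Stieltjes integrals under uniform convergence with bounded variation, and that is done in the appendix. The one point requiring care is the scaling identity on subintervals, since it is needed for all \(u\in[0,1]\) simultaneously, not only at \(u=1\). Proposition~\ref{prop:exact-path-scaling} gives this identity for each \(u\). It rests on two facts. First, the increment-dependence of the Lie integral handles the subtraction of \(\ell(0)\). Second, the homogeneity \eqref{eq:homogeneity-path-functional} together with invariance under the increasing reparametrization \(u\mapsto Tu\) handles the factor \(1/T\).
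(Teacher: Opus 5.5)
Your proposal is correct and follows the paper's proof. You identify the Carnot-rescaled development with the development of \(\ell_{T_j}\) via Proposition~\ref{prop:exact-path-scaling}, invoke Proposition~\ref{prop:pathwise-CF-continuity} layerwise and then at group level, and set \(u=1\) for the endpoint statements; your added checks (the variation identity \(\Var(\ell_{T_j};[0,1])=T_j^{-1}\Var(\ell;[0,T_j])\) and uniform continuity of \(\exp\) on a compact neighbourhood) are routine verifications that the paper leaves implicit.
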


\begin{proof}
Proposition~\ref{prop:exact-path-scaling} identifies the left side of \eqref{eq:uniform-developed-path-limit} with the development of \(\ell_{T_j}\).  Proposition~\ref{prop:pathwise-CF-continuity} identifies the uniform limit of those developed paths with the development of \(\ell^\infty\).  The componentwise statement follows from the same argument before exponentiation, and the endpoint statements are obtained by setting \(u=1\).
\end{proof}

The asymptotic-development theorem distinguishes the macroscopic horizontal path, its developed path, and the endpoint of that development.  In general, the endpoint does not determine the path that produced it.  A non-straight limit may retain chronological information in its higher homogeneous components even when the horizontal endpoint alone does not reveal that information.

\subsection{Full-limit rigidity}

\begin{proposition}[Rigidity of full macroscopic paths]
\label{prop:full-macroscopic-rigidity}
If \(\ell_T\to\ell^\infty\) uniformly as \(T\to\infty\), then
\[
  \ell^\infty(u)=u\mu,
  \qquad 0\leq u\leq1,
\]
where \(\mu=\ell^\infty(1)\).
\end{proposition}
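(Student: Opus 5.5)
The plan is to exploit the semigroup structure of the rescaling \(T\mapsto\ell_T\).  For \(T>0\), \(s>0\), and \(0\leq u\leq1\) with \(su\leq1\), we have the exact relation
\[
  \ell_{sT}(u)=\frac{\ell(sTu)-\ell(0)}{sT}=\frac{1}{s}\,\ell_T(su).
\]
Now fix \(s\in(0,1]\) and \(u\in[0,1]\), and let \(T\to\infty\).  The left side converges to \(\ell^\infty(u)\), because \(sT\to\infty\) and the convergence \(\ell_{T'}\to\ell^\infty\) holds along the full family \(T'\to\infty\).  The right side converges to \(s^{-1}\ell^\infty(su)\).  Hence
\[
  \ell^\infty(su)=s\,\ell^\infty(u),\qquad 0<s\leq1,\ 0\leq u\leq 1.
\]
Pointwise convergence suffices for this step; uniformity is not needed.

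Taking \(u=1\) gives \(\ell^\infty(s)=s\,\ell^\infty(1)=s\mu\) for every \(s\in(0,1]\).  At \(s=0\) we note that \(\ell_T(0)=0\) for all \(T\), so \(\ell^\infty(0)=0=0\cdot\mu\).  Together these give \(\ell^\infty(u)=u\mu\) on \([0,1]\).

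There is essentially no obstacle in this argument.  The one point needing care is that the limit on the left-hand side must be taken along \(sT\) rather than \(T\).  This is precisely where the hypothesis of convergence along the full family \(T\to\infty\), rather than along a subsequence \(T_j\), is used.  Along a sequence \((T_j)\), the rescaled times \(sT_j\) need not belong to the sequence, and the argument breaks down; this matches the realization phenomenon for subsequential limits discussed earlier.  The variation bound from Definition~\ref{def:macroscopic-horizontal-path} plays no role here.
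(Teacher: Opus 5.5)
Your proof is correct and is essentially the paper's argument. The paper uses the identity \(\ell_T(u)=u\,\ell_{Tu}(1)\), which is your relation \(\ell_{sT}(u)=s^{-1}\ell_T(su)\) specialized to \(u=1\); it then passes to the limit along \(Tu\to\infty\) using full-family convergence and handles \(u=0\) separately, exactly as you do.
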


\begin{proof}
For \(u>0\), the rescaled paths satisfy \(\ell_T(u)=u\ell_{Tu}(1)\).  Since \(Tu\to\infty\), full convergence gives \(\ell^\infty(u)=u\ell^\infty(1)\).  The identity at \(u=0\) is immediate.
\end{proof}

The preceding proposition is a rigidity statement about convergence of the full scaling family.  For subsequential limits of bounded inputs, there is no corresponding restriction on shape beyond the regularity forced by boundedness.

\begin{proposition}[Realization of subsequential macroscopic paths]
\label{prop:subsequential-realization}
Let \(\eta:[0,1]\to V_1\).  The following are equivalent:
\begin{enumerate}
\item \(\eta\) is Lipschitz and \(\eta(0)=0\).
\item There exists a bounded measurable horizontal input
\[
  K\in L^\infty([0,\infty);V_1)
\]
and a sequence \(T_j\to\infty\) such that, for
\[
  \ell(t)=\int_0^t K(s)\,ds,
\]
the path \(\eta\) is a macroscopic horizontal path along \((T_j)\).
\end{enumerate}
\end{proposition}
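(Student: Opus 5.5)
The implication (2)\(\Rightarrow\)(1) is the short direction; I treat it first. If \(\|K\|_\infty\le M\), then \(\ell\) is \(M\)-Lipschitz. Hence each rescaled path \(\ell_{T}(u)=(\ell(Tu)-\ell(0))/T\) is \(M\)-Lipschitz on \([0,1]\) and satisfies \(\ell_T(0)=0\). Uniform limits of \(M\)-Lipschitz paths vanishing at \(0\) are again \(M\)-Lipschitz and vanish at \(0\), so \(\eta\) satisfies (1). The variation bound in Definition~\ref{def:macroscopic-horizontal-path} plays no role in this direction.

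For (1)\(\Rightarrow\)(2), let \(\eta\) be \(M\)-Lipschitz with \(\eta(0)=0\). Since \(V_1\) is finite dimensional, \(\eta\) is differentiable almost everywhere with \(\|\eta'\|\le M\) and \(\eta(u)=\int_0^u\eta'\). I will build \(K\) by concatenating rescaled copies of \(\eta'\) on a rapidly growing sequence of scales. Choose \(T_j\) with \(T_{j}/T_{j-1}\to\infty\), for instance \(T_j=j!\,\) or \(T_j=2^{j^2}\), and set
\[
K(s)=\eta'(s/T_j)\qquad\text{for } T_{j-1}<s\le T_j,
\]
with \(K=0\) on \([0,T_0]\). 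Then \(\|K\|_\infty\le M\), so \(K\in L^\infty\). The variation bound is automatic: \(\Var(\ell_{T_j};[0,1])\le \int_0^1\|K(T_ju)\|\,du\le M\).

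The main estimate is uniform convergence \(\ell_{T_j}\to\eta\). For \(u\in[0,1]\) write
\[
\ell_{T_j}(u)=\frac1{T_j}\int_0^{T_ju}K(s)\,ds .
\]
On the range \(s> T_{j-1}\) we have \(K(s)=\eta'(s/T_j)\), and the substitution \(s=T_jv\) turns that part of the integral into \(\int_{T_{j-1}/T_j}^{u}\eta'(v)\,dv\) when \(u\ge T_{j-1}/T_j\). The remaining contribution, from \([0,\min(T_ju,T_{j-1})]\), has norm at most \(MT_{j-1}/T_j\) after division by \(T_j\). Likewise \(\|\eta(T_{j-1}/T_j)\|\le MT_{j-1}/T_j\). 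The case \(u<T_{j-1}/T_j\) is handled the same way, since both \(\ell_{T_j}(u)\) and \(\eta(u)\) are then of size \(O(MT_{j-1}/T_j)\). Altogether
\[
\sup_{u}\|\ell_{T_j}(u)-\eta(u)\|\le 3M\,T_{j-1}/T_j\to0 .
\]

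Measurability of \(K\) is immediate, since \(K\) is a countable piecewise definition built from the measurable function \(\eta'\). The only real design point is the super-geometric growth \(T_{j-1}/T_j\to0\). This growth is what makes the earlier blocks of \(K\), which were tuned to different scales, negligible at scale \(T_j\). It also explains why the realization only holds subsequentially, in contrast with Proposition~\ref{prop:full-macroscopic-rigidity}. I do not expect any step beyond this bookkeeping to present a genuine obstacle.
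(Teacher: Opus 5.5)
Your proof is correct and follows the paper's strategy. Both arguments use scales with $T_{j-1}/T_j\to 0$, and each block $(T_{j-1},T_j]$ carries a scale-$T_j$ copy of $\eta$, so the earlier blocks become negligible after rescaling. The difference is in how the blocks are glued. The paper builds $\ell$ directly and inserts an affine bridge on $[T_{j-1},2T_{j-1}]$, so that $\ell_{T_j}=\eta$ exactly on $[2T_{j-1}/T_j,1]$; the cost is tripling the Lipschitz constant. You instead concatenate the inputs $\eta'(\cdot/T_j)$. This keeps $\|K\|_\infty\le M$ and leaves only a constant offset of size $O(T_{j-1}/T_j)$ in $\ell_{T_j}$.
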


\begin{proof}
Suppose first that \(K\in L^\infty([0,\infty);V_1)\), and let \(M=\lVert K\rVert_\infty\).  For \(0\leq u\leq v\leq1\),
\[
  \lVert\ell_T(v)-\ell_T(u)\rVert
  =\frac1T\left\lVert\int_{Tu}^{Tv}K(s)\,ds\right\rVert
  \leq M(v-u).
\]
Thus, every \(\ell_T\) is \(M\)-Lipschitz and vanishes at the origin.  The same is true of every uniform subsequential limit, and
\[
  \Var(\ell_T;[0,1])\leq M.
\]
This yields that (2) implies (1).

Conversely, suppose that \(\eta\) is \(L\)-Lipschitz and \(\eta(0)=0\).  Choose \(T_j\to\infty\) so rapidly that
\[
  r_j:=\frac{T_{j-1}}{T_j}\longrightarrow0,
  \qquad
  2T_{j-1}<T_j
\]
for \(j\geq2\).  Define \(\ell\) first on \([0,T_1]\) by
\[
  \ell(t)=T_1\eta(t/T_1).
\]
Suppose inductively that \(\ell\) has been defined through time \(T_{j-1}\), with
\[
  \ell(T_{j-1})=T_{j-1}\eta(1).
\]
On \([T_{j-1},2T_{j-1}]\), let \(\ell\) be the affine path joining \(T_{j-1}\eta(1)\) to \(T_j\eta(2r_j)\), and on \([2T_{j-1},T_j]\), put
\[
  \ell(t)=T_j\eta(t/T_j).
\]
The speed of the affine joining path is at most
\[
  \frac{\lVert T_j\eta(2r_j)-T_{j-1}\eta(1)\rVert}{T_{j-1}}
  \leq3L,
\]
while the remaining part of the new block is \(L\)-Lipschitz.  Hence, \(\ell\) is globally \(3L\)-Lipschitz.  It is absolutely continuous.  Choose a measurable representative \(K\) of its almost-everywhere derivative, setting \(K=0\) on the exceptional null set.  Then
\[
  \ell(t)=\int_0^tK(s)\,ds,
  \qquad
  \lVert K\rVert_\infty\leq3L.
\]

At scale \(T_j\), the construction gives
\[
  \ell_{T_j}(u)=\eta(u),
  \qquad 2r_j\leq u\leq1.
\]
For \(0\leq u\leq2r_j\), the global Lipschitz bound and \(\eta(0)=0\) give
\[
  \lVert\ell_{T_j}(u)-\eta(u)\rVert
  \leq3Lu+Lu
  \leq8Lr_j.
\]
Thus, \(\ell_{T_j}\to\eta\) uniformly.  Moreover,
\[
  \Var(\ell_{T_j};[0,1])\leq\lVert K\rVert_\infty,
\]
so \(\eta\) is a macroscopic horizontal path along \((T_j)\).
\end{proof}

Thus, full macroscopic horizontal limits are rigidly straight, whereas subsequential macroscopic horizontal limits generated by bounded inputs are otherwise unrestricted within the class of Lipschitz paths based at the origin.  The realizing primitive is absolutely continuous and hence remains within the class of continuous paths of bounded variation on compact intervals used in Theorem~\ref{thm:macroscopic-path}.

\begin{corollary}[Straight-line vanishing]
\label{cor:straight-line}
If \(\ell^\infty(u)=u\mu\), then
\[
  H_1[\ell^\infty]=\mu,
  \qquad
  H_k[\ell^\infty]=0\quad(k\geq2).
\]
Consequently, whenever a full macroscopic horizontal path exists, the higher Carnot-normalized endpoint components vanish.
\end{corollary}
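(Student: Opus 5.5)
The plan is to compute the Chacon--Fomenko recursion \eqref{eq:CF-recursion} directly for the straight path \(\gamma(u)=u\mu\), showing that every bracket integral \(T_k[\gamma]\) with \(k\geq1\) vanishes, and then to run an induction on \(k\) to obtain \(H_k[\gamma]=0\) for \(k\geq2\). The first claim, \(H_1[\gamma]=T_0[\gamma]=\gamma(1)-\gamma(0)=\mu\), is immediate from the definition.

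For the bracket integrals, \(\gamma\) is absolutely continuous with \(d\gamma(u)=\mu\,du\). Substituting into \eqref{eq:Tj-expanded-bracket-integral} gives, for \(k\geq1\),
\[
  T_k[\gamma]=\Bigl(\int_{0<u_{k+1}<\cdots<u_1<1}du_1\cdots du_{k+1}\Bigr)\,
  [\cdots[[\mu,\mu],\mu],\ldots,\mu]=0,
\]
because the innermost bracket \([\mu,\mu]\) already vanishes. Equivalently, \(I_{k+1}(\gamma)_{0,1}=\mu^{\otimes(k+1)}/(k+1)!\), and \(B_{k+1}\) annihilates \(\mu^{\otimes(k+1)}\) for \(k+1\geq2\).

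Next comes the induction. Suppose \(H_r[\gamma]=0\) for \(2\leq r\leq k\). In \eqref{eq:CF-recursion}, the leading term \(T_k[\gamma]\) vanishes. In each summand indexed by \(r\), the innermost object is \(T_{k-r}[\gamma]\). If \(r<k\), then \(T_{k-r}=0\) and the summand vanishes. If \(r=k\), then \(T_0=\mu\), and we must handle the terms \(\tfrac12[H_k,\mu]\) together with the Bernoulli terms \([H_{m_1},[\cdots[H_{m_{2p}},\mu]\cdots]]\), where \(m_1+\cdots+m_{2p}=k\).

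For \(k\geq2\), the term \(\tfrac12[H_k,\mu]\) is zero by the induction hypothesis. In the Bernoulli terms, either some \(m_i\geq2\), so \(H_{m_i}=0\) by induction, or every \(m_i=1\). In the latter case the innermost bracket is \([H_1,\mu]=[\mu,\mu]=0\). Since the innermost factor is always \(H_{m_{2p}}\) bracketed with \(\mu\), this case vanishes as well. For \(k=1\), the only surviving term is \(\tfrac12[H_1,T_0]=\tfrac12[\mu,\mu]=0\), because \(2p\leq1\) is impossible. Therefore \((k+1)H_{k+1}=0\), which closes the induction.

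An alternative check avoids the recursion altogether. The developed path solves \(U'=\mu U\), so \(U(1)=\exp\mu\) and \(\log U(1)=\mu\), which lies purely in \(V_1\). I would mention this as a sanity check, while keeping the recursion argument as the proof.

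For the consequence, suppose a full macroscopic horizontal path exists. Proposition~\ref{prop:full-macroscopic-rigidity} then forces it to be straight, \(\ell^\infty(u)=u\mu\), and Theorem~\ref{thm:macroscopic-path} gives \(H_k[T]/T^k\to H_k[\ell^\infty]\). The first part shows that this limit is \(0\) for every \(k\geq2\), so the higher Carnot-normalized endpoint components vanish.

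The only point needing care is the bookkeeping in the Bernoulli sum, namely that the innermost bracket always pairs some \(H_{m}\) with \(\mu\). This is routine.
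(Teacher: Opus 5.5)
Your proposal is correct and is essentially the paper's argument: every increment of the straight path is parallel to \(\mu\), so every bracket of two or more increments vanishes, and \(H_1\) is the displacement \(\mu\). Your induction through \eqref{eq:CF-recursion} just makes explicit what the paper's one-line proof leaves implicit, namely that the innermost brackets \([H_{m_{2p}},\mu]\) and \([H_k,\mu]\) always vanish. Your deduction of the consequence via Proposition~\ref{prop:full-macroscopic-rigidity} and Theorem~\ref{thm:macroscopic-path} is also sound, since a full macroscopic path carries the uniform variation bound by definition and the theorem can be applied along every sequence \(T_j\to\infty\).
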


\begin{proof}
Every increment of a straight path is parallel to \(\mu\), so every bracket containing two or more increments vanishes.  The first homogeneous component is the endpoint displacement.
\end{proof}

\begin{corollary}[Bounded input with an ordinary mean]
\label{cor:bounded-mean-straight}
Let \(K:[0,\infty)\to V_1\) be bounded and let \(\ell(t)=\int_0^tK(s)\,ds\).  If
\[
  \frac1T\int_0^TK(s)\,ds\longrightarrow\mu,
\]
then \(\ell_T\to(u\mapsto u\mu)\) uniformly, the variations of \(\ell_T\) are uniformly bounded, and
\[
  \frac{H_1[T]}T\longrightarrow\mu,
  \qquad
  \frac{H_k[T]}{T^k}\longrightarrow0\quad(k\geq2).
\]
\end{corollary}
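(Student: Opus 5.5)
The plan is to reduce Corollary~\ref{cor:bounded-mean-straight} to the full-limit machinery already in place: first show that the whole family \(\ell_T\) converges uniformly to the straight segment \(u\mapsto u\mu\) with uniformly bounded variation, and then read off the endpoint components from Theorem~\ref{thm:macroscopic-path} and Corollary~\ref{cor:straight-line}.

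\textbf{Variation bound.} Let \(M=\sup\|K\|\). Exactly as in the first half of the proof of Proposition~\ref{prop:subsequential-realization}, every \(\ell_T\) is \(M\)-Lipschitz on \([0,1]\) and satisfies \(\ell_T(0)=0\). Hence \(\Var(\ell_T;[0,1])\le M\) for all \(T>0\).

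\textbf{Uniform convergence.} This is the only step needing care, because the hypothesis controls only \(\ell_T(1)\). For \(u>0\) we have the identity
\[
  \ell_T(u)=u\,\ell_{Tu}(1),
\]
and \(\ell_{Tu}(1)=\frac1{Tu}\int_0^{Tu}K\,ds\), which tends to \(\mu\) as \(Tu\to\infty\). Fix \(\varepsilon>0\) and split \([0,1]\) at a small threshold \(u_0\).
\begin{itemize}
\item[(a)] For \(u\le u_0\), both \(\ell_T\) and \(u\mapsto u\mu\) are \(M\)-Lipschitz and vanish at \(0\), since \(\|\mu\|\le M\). Hence \(\|\ell_T(u)-u\mu\|\le 2Mu_0\), which is at most \(\varepsilon\) once \(u_0\) is chosen small.
\item[(b)] For \(u\in[u_0,1]\), we have \(Tu\ge Tu_0\). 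Choose \(S\) so that \(\|\ell_s(1)-\mu\|\le\varepsilon\) for all \(s\ge S\). Then for \(T\ge S/u_0\),
\[
  \|\ell_T(u)-u\mu\| = u\,\|\ell_{Tu}(1)-\mu\| \le \varepsilon.
\]
\end{itemize}
Together these give \(\sup_u\|\ell_T(u)-u\mu\|\le\varepsilon\) for all large \(T\). So \(u\mapsto u\mu\) is the full macroscopic horizontal path in the sense of Definition~\ref{def:macroscopic-horizontal-path}.

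\textbf{Conclusion.} Take an arbitrary sequence \(T_j\to\infty\). Theorem~\ref{thm:macroscopic-path}, in its endpoint form \eqref{eq:endpoint-asymptotic-development}, gives
\[
  \frac{H_k[T_j]}{T_j^k}\longrightarrow H_k[\ell^\infty].
\]
By Corollary~\ref{cor:straight-line}, \(H_1[\ell^\infty]=\mu\) and \(H_k[\ell^\infty]=0\) for \(k\ge2\). Since the sequence \(T_j\) was arbitrary, these limits hold as \(T\to\infty\).

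The main obstacle is the uniform-convergence step, specifically the small-\(u\) region, where \(Tu\) need not be large. The uniform Lipschitz bound coming from the boundedness of \(K\) is exactly what handles it.
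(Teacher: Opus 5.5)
Your proof is correct and follows the paper's own argument. The paper likewise uses \(\ell_T(u)=u\,\ell_{Tu}(1)\), handles the short interval near \(u=0\) by boundedness of \(K\), bounds \(\Var(\ell_T)\le\|K\|_\infty\), and concludes with Theorem~\ref{thm:macroscopic-path} and Corollary~\ref{cor:straight-line}; you simply write out the explicit \(\varepsilon\)--\(u_0\) bookkeeping and the arbitrary-sequence step that the paper leaves implicit.
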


\begin{proof}
For \(u\in[0,1]\), write \(\ell_T(u)=u\,\ell_{Tu}(1)\).  The convergence of the averages is uniform away from \(u=0\), while boundedness of \(K\) controls the remaining short interval near \(u=0\).  Moreover, \(\Var(\ell_T)\leq\|K\|_\infty\).  The conclusion follows from Theorem~\ref{thm:macroscopic-path} and Corollary~\ref{cor:straight-line}.
\end{proof}
\section{The Geometry of the First Three Homogeneous Layers}
\label{sec:geometry-first-layers}

This section identifies the geometric content of the first three
homogeneous components before turning to their subleading asymptotics.  For a horizontal path, \(H_1\) records displacement, \(H_2\)
records signed area, and \(H_3\) records a corrected first moment of area
in the available depth-three bracket directions.  The geometric object is
always the complete homogeneous component \(H_k\), not an isolated
simplex integral taken out of the recursion.  We also give explicit
macroscopic horizontal paths for which the higher components do not
vanish.  A piecewise-linear corner displays the memory of chronological order, a
circular arc makes the depth-two area visible, and the Engel specialization
shows how the same corner produces a depth-three area moment.

\subsection{The complete first layers}
\label{sec:first-layer-geometry}

Let
\[
  K(t)=\sum_{i=1}^r f_i(t)X_i\in V_1,
  \qquad
  \ell(t)=\int_0^tK(s)\,ds.
\]
The first homogeneous component is
\[
  H_1[t]=\ell(t)-\ell(0),
\]
so it records horizontal displacement.  The next simplex term is
\[
  T_1[t]
  =\int_{0<u_2<u_1<t}[K(u_1),K(u_2)]\,du_2\,du_1,
\]
and the Chacon--Fomenko recursion~\eqref{eq:CF-recursion} gives
\[
  H_2[t]=\frac12T_1[t].
\]
Thus, each coordinate of \(H_2\) is an ordered signed area in a horizontal
coordinate plane, with the sign fixed by the Chacon--Fomenko later--earlier ordering.

At depth three the geometric quantity is no longer the triple-simplex
term by itself.  The recursion combines
\[
  T_2[t]
  =\int_{0<u_3<u_2<u_1<t}
    [[K(u_1),K(u_2)],K(u_3)]\,du_3\,du_2\,du_1
\]
with brackets involving \(H_1\) and \(H_2\).  The resulting complete
component \(H_3[t]\) is the corrected logarithmic first area moment.  The
Engel recursion in Section~\ref{sec:engel-recursion}, especially
equation~\eqref{eq:H3-Engel}, makes the correction explicit: the complete
depth-three component contains not only the  triple-simplex term but
also the displacement--area bracket term
\(\frac1{12}F(t)A(t)X_4\), where \(F(t)\) is the horizontal
\(X_1\)-displacement, \(A(t)\) is the depth-two area integral, and
\(X_4\) spans the third layer.  The Chacon-Fomenko nonabelian Stokes calculation in
Section~\ref{sec:stokes-heis-engel}, especially
equation~\eqref{eq:Engel-open-closed-correction}, identifies the same
correction geometrically as the BCH term relating the closed first area
moment to the logarithm of the open path.  This distinction between a raw
ordered integral and the complete layer becomes more, not less,
important when subleading asymptotics are studied.

\subsection{A corner and chronological memory}
\label{sec:corner-memory}

Let \(X,Y\in V_1\), and let \(a,b>0\).  Consider the horizontal
piecewise-linear path
\begin{equation}
  \gamma_{a,b}^{X,Y}(s)
  =
  \begin{cases}
    sX, & 0\leq s\leq a,\\[2mm]
    aX+(s-a)Y, & a\leq s\leq a+b.
  \end{cases}
  \label{eq:corner-path}
\end{equation}
The path first follows the direction \(X\) and then turns into the
direction \(Y\).  With the Chacon--Fomenko chronological convention, its developed
endpoint and Lie integral are
\begin{equation}
  g_{\gamma}=\exp(bY)\exp(aX),
  \qquad
  H[\gamma_{a,b}^{X,Y}]
  =\operatorname{BCH}(bY,aX).
  \label{eq:corner-development}
\end{equation}
The first three homogeneous components are
\begin{align}
  H_1[\gamma_{a,b}^{X,Y}]&=aX+bY,
  \label{eq:corner-H1}\\
  H_2[\gamma_{a,b}^{X,Y}]&=-\frac{ab}{2}[X,Y],
  \label{eq:corner-H2}\\
  H_3[\gamma_{a,b}^{X,Y}]&=
  \frac{a^2b}{12}[X,[X,Y]]
  +\frac{ab^2}{12}[Y,[Y,X]].
  \label{eq:corner-H3}
\end{align}
Thus, \(H_1\) records only the total horizontal displacement, while
\(H_2\) records the fact that the \(X\)-segment occurred before the
\(Y\)-segment, the orientation of the turn, and the signed area swept out by the path in the \([X,Y]\)-direction. The depth-three component records how the second-order contribution created by the corner is itself affected by motion in the \(X\)- and \(Y\)-directions. Notice that it detects information that disappears when \([X,Y]\) is central. 

If \([X,Y]=0\), then the two actions commute and
\[
  \exp(bY)\exp(aX)=\exp(aX+bY).
\]
In that case, every higher component vanishes.  The corner is 
remembered only when its two directions generate noncommutative motion.
The corner \(\gamma_{a,b}^{X,Y}\) is Lipschitz and based at the origin.  Proposition~\ref{prop:subsequential-realization} gives a bounded horizontal input and a sequence \(T_j\to\infty\) for which it is the macroscopic horizontal path.  Theorem~\ref{thm:macroscopic-path} then gives
\[
  \frac{H_k[T_j]}{T_j^k}
  \longrightarrow H_k[\gamma_{a,b}^{X,Y}],
\]
so the chronological memory in \eqref{eq:corner-H2} and
\eqref{eq:corner-H3} survives directly in the corresponding Carnot
layers of the limiting point in the asymptotic cone.

\subsection{The nonabelian Stokes formula}

Chacon and Fomenko's nonabelian Stokes theorem
\cite[Definition~6, p.~266; Theorem~8, pp.~280--281]{CF91b} uses the Chacon--Fomenko
curvature
\[
  \mathcal F=\widehat d(-\mathcal A)=-d{\mathcal A} + {\mathcal A} \wedge {\mathcal A}
\]
of a Lie-algebra-valued connection form \(\mathcal A\).  Let
\(\Phi:I^2\to M\) be a smooth parameterized surface, oriented by
\(du\wedge dv\), and take \(\Phi(0,0)\) as the base point.  For each
\((u,v)\in I^2\), let \(g_{(u,v)}\) be the image under \(\Phi\) of the
right-angle path that runs from \((0,0)\) to \((0,v)\) and then to
\((u,v)\), and let \(P(u,v)\) be parallel transport along this path.
The twisted curvature is the base-point-valued function
\begin{equation}
  \widetilde{\mathcal F}^{*}(u,v)
  =\operatorname{Ad}_{P(u,v)^{-1}}
    \left(
      (\Phi^*\mathcal F)_{(u,v)}(\partial_u,\partial_v)
    \right).
  \label{eq:twisted-curvature-definition}
\end{equation}
In a matrix representation, this is
\(P(u,v)^{-1}(\Phi^*\mathcal F)(\partial_u,\partial_v)P(u,v)\).
The conjugation is essential: infinitesimal curvature holonomies based at
different points of the surface must be transported to a common base
point before they are multiplied noncommutatively.  The surface Lie
integral also retains the ordering induced by the parameter square and
the Chacon--Fomenko subdivision; in the abelian case both the conjugation
and the ordering become immaterial.

With \(-\mathcal A^*\) denoting the boundary input, the nonabelian Stokes theorem gives the
group-level identity
\begin{equation}
  \exp\!\left(\Lint_{\partial\Phi}-\mathcal A^*\right)
  =
  \exp\!\left(L\!\iint_{I^2}\widetilde{\mathcal F}^{*}\right).
  \label{eq:stokes}
\end{equation}
The equality of the two Lie-algebra-valued exponents is equation~(29) in
\cite[p.~281]{CF91b}, under the local convergence and injectivity
conditions required there for a general finite-dimensional Lie
algebra.  In the setting of this paper, those restrictions disappear:
\(\g\) is nilpotent, so both Chacon--Fomenko series terminate, and
\(\exp:\g\to G\) is a global diffeomorphism.  Applying its global inverse turns \eqref{eq:stokes} into an equality of finite
\(\g\)-valued Lie integrals with no local smallness condition.

The Chacon--Fomenko nonabelian Stokes theorem concerns the Lie integral of the \emph{whole} oriented
boundary path.  If that boundary is decomposed into several edges, their
contributions are combined by the generalized chronological BCH product
\eqref{eq:CF-concatenation-BCH}; they are not added as vectors in
\(\g\).

\subsection{Parameterized fillings and Chacon--Fomenko Stokes}
\label{sec:simplices-stokes}

For a pair of horizontal generators \((X_i,X_j)\), put
\[
  \omega_{ij}=X_i\,dx_i+X_j\,dx_j,
  \qquad
  \mathcal A_{ij}=-\omega_{ij}.
\]
Following Chacon and Fomenko's curvature convention~\cite[Section~2]{CF91b}, the curvature is
\[
  \mathcal F_{ij}=\widehat d(-\mathcal A_{ij}).
\]
With the component convention used in this paper,
\[
  \mathcal F_{ij}=-[X_i,X_j] \,dx_i\wedge dx_j.
\]
For a smooth parameterized filling \(\Phi:I^2\to\mathbb R^2\), write
\(\omega_{ij}^{*}\) for the boundary input induced by \(\omega_{ij}\).
The nilpotent specialization of the Chacon--Fomenko nonabelian Stokes theorem gives
\begin{equation}
  \Lint_{\partial\Phi}\omega_{ij}^{*}
  =L\!\iint_{I^2}\widetilde{\mathcal F}_{ij}^{*}.
  \label{eq:CF-Stokes-pair}
\end{equation}
The integral on the right is taken over the parameter
square because \(\widetilde{\mathcal F}_{ij}^{*}\) denotes the twisted
pullback of the curvature form by \(\Phi\).  The left side is the Lie integral of the
entire oriented boundary.  If the boundary edges, in chronological order,
are \(c_1,\ldots,c_m\), then
\[
  \Lint_{\partial\Phi}\omega_{ij}^{*}
  =\log\!\left(e^{H[c_m]}\cdots e^{H[c_1]}\right).
\]
Thus, the boundary pieces combine by the ordered generalized BCH product.

\subsubsection*{The ruled filling}
Let \(r:[0,t]\to\mathbb R^2\) be the projection of the horizontal
primitive \(\ell\) onto the \((X_i,X_j)\)-coordinate plane, written in
those coordinates, and assume \(r(0)=0\). Define the outward chord
\[
  c_t(s)=\frac{s}{t}r(t),
  \qquad 0\leq s\leq t.
\]
Its reverse, which closes the orbit segment from \(r(t)\) to the origin,
is
\[
  \overline c_t(s)=c_t(t-s).
\]
Let
\begin{equation}
  R_t(s)=r(s)-c_t(s).
  \label{eq:Rt-deviation}
\end{equation}
and use the ruled map
\begin{equation}
  \Phi_t(s,v)=c_t(s)+vR_t(s),
  \qquad (s,v)\in[0,t]\times[0,1].
  \label{eq:ruled-surface-filling}
\end{equation}
With orientation \(ds\wedge dv\), the induced boundary traverses the
outward chord and then the orbit in reverse.  If \(H_\gamma\) and
\(H_c\) denote the Lie integrals of the outward orbit path \(r\) and the
outward chord \(c_t\), respectively, then
\begin{equation}
  H_{\partial\Phi_t}
  =\operatorname{BCH}(-H_\gamma,H_c),
  \label{eq:ruled-boundary-BCH}
\end{equation}
and so
\begin{equation}
  H_\gamma
  =\operatorname{BCH}(H_c,-H_{\partial\Phi_t}).
  \label{eq:open-from-ruled-boundary}
\end{equation}
The chord has only a first-layer logarithm, but its BCH interaction with
the surface term can contribute in higher layers.  This is the origin of
the Engel open-path correction in
\eqref{eq:Engel-open-closed-correction}.

\begin{remark}[The parameter square and rank drops]
Chacon and Fomenko carry out the construction on the parameter square
\cite[Sections~4--6]{CF91b}, using
pullbacks of the connection and curvature, right-angle transport paths,
and a snake-enumerated subdivision into small rectangles.  These operations do not
require the map \(\Phi\) to have an inverse.  Where the rank drops, the
pulled-back curvature two-form vanishes because the two parameter tangent
vectors are dependent.  The same parameter-square proof applies
to the ruled maps used here.  In the nilpotent target, the Chacon--Fomenko series
terminates and \(\exp:\g\to G\) is a global diffeomorphism, so the
resulting Lie-algebra identity is global.
\end{remark}

For later estimates, it is useful that the ruled surface quantities reduce
to one-parameter integrals.  From~\eqref{eq:ruled-surface-filling},
\[
  \partial_s\Phi_t=c_t'(s)+vR_t'(s),
  \qquad
  \partial_v\Phi_t=R_t(s),
\]
and hence
\begin{equation}
  \det D\Phi_t(s,v)
  =\det\bigl(c_t'(s)+vR_t'(s),R_t(s)\bigr)
  =J_0(s)+vJ_1(s),
  \label{eq:ruled-jacobian}
\end{equation}
where
\[
  J_0(s)=\det(c_t'(s),R_t(s)),
  \qquad
  J_1(s)=\det(R_t'(s),R_t(s)).
\]
Equivalently,
\[
  \Phi_t^*(dx\wedge dy)
  =\bigl(J_0(s)+vJ_1(s)\bigr)\,ds\wedge dv.
\]
Let \(\Sigma_t\) denote the oriented parameterized \(2\)-chain determined
by \(\Phi_t\), and define integration over it by pullback:
\begin{equation}
  \iint_{\Sigma_t}dx\wedge dy
  :=\int_{[0,t]\times[0,1]}\Phi_t^*(dx\wedge dy).
  \label{eq:Sigma-pullback-definition}
\end{equation}
Then
\begin{equation}
  \iint_{\Sigma_t}dx\wedge dy
  =\int_0^t\left(J_0(s)+\frac12J_1(s)\right)ds.
  \label{eq:area-ruled-one-parameter}
\end{equation}
If \(c_{t,x}\) and \(R_{t,x}\) denote the first coordinates of
\(c_t\) and \(R_t\), then
\begin{align}
  \iint_{\Sigma_t}x\,dx\wedge dy
  &=\int_0^t\Bigl[
    c_{t,x}(s)J_0(s)
    +\frac12c_{t,x}(s)J_1(s) \notag\\
  &\hspace{3.6em}
    +\frac12R_{t,x}(s)J_0(s)
    +\frac13R_{t,x}(s)J_1(s)
    \Bigr]ds.
  \label{eq:xmoment-ruled-one-parameter}
\end{align}
These identities express the signed area and first \(x\)-moment of the
ruled filling as one-parameter integrals determined by the deviation
\(R_t\) of the projected horizontal path from its chord.  They will be
combined with the ordered Chacon--Fomenko Stokes expansion and the boundary BCH
identity to recover the complete logarithmic components of the open path.

\subsection{Heisenberg geometry: signed area}
\label{sec:Heisenberg}

Let \(\mathfrak h=\operatorname{span}\{X,Y,Z\}\) with
\([X,Y]=Z\), and let \(H\) be the simply connected Heisenberg group.  In
first-kind exponential coordinates,
\begin{equation}
  (x,y,z)(x',y',z')
  =\left(x+x',y+y',z+z'+\frac12(xy'-yx')\right).
  \label{eq:Heis-product}
\end{equation}
Fix a lattice \(\Gamma\subset H\).  For a horizontal input
\begin{equation}
  K(s)=f(s)X+g(s)Y,
  \label{eq:Heis-K}
\end{equation}
write
\[
  F(t)=\int_0^tf(s)\,ds,
  \qquad
  G(t)=\int_0^tg(s)\,ds.
\]
The Lie integral, equivalently the logarithm of the horizontal
development, is
\begin{equation}
  H[t]=F(t)X+G(t)Y+\frac12A(t)Z,
  \label{eq:Heis-Omega}
\end{equation}
where
\begin{equation}
  A(t)=\int_{0<u_2<u_1<t}
  \bigl(f(u_1)g(u_2)-g(u_1)f(u_2)\bigr)\,du_2\,du_1.
  \label{eq:Heis-area}
\end{equation}
Equivalently,
\begin{equation}
  A(t)=\int_0^t f(u)G(u)\,du-\int_0^t g(u)F(u)\,du.
  \label{eq:Heis-area-antiderivative}
\end{equation}

The nonabelian Stokes calculation identifies \(\frac12A(t)Z\) with the signed area
of the horizontal path
\[
  r(u)=(F(u),G(u))\in V_1\cong\mathbb R^2
\]
closed by \(\overline c_t\), including the BCH interaction with the closing
chord.  With the Chacon--Fomenko later--earlier ordering, \(H_2[t]\) is the negative
of L\'evy area in the standard probabilistic orientation convention.

\begin{example}[A circular macroscopic horizontal arc]
\label{ex:heis-circular-arc}
For \(R>0\) and \(0<\theta\leq2\pi\), consider the horizontal path
\begin{equation}
  \gamma_{R,\theta}(u)
  =R\sin(\theta u)X
   +R\bigl(1-\cos(\theta u)\bigr)Y,
  \qquad 0\leq u\leq1.
  \label{eq:circular-arc-path}
\end{equation}
Its velocity has constant magnitude \(R\theta\), while its horizontal
direction turns through the angle \(\theta\).  Formula
\eqref{eq:Heis-area-antiderivative} gives
\begin{align*}
  A[\gamma_{R,\theta}]
  &=R^2\int_0^1\theta\bigl(\cos(\theta u)-1\bigr)\,du\\
  &=-R^2\bigl(\theta-\sin\theta\bigr),
\end{align*}
and hence
\begin{equation}
  H_2[\gamma_{R,\theta}]
  =-\frac{R^2}{2}\bigl(\theta-\sin\theta\bigr)Z.
  \label{eq:circular-arc-H2}
\end{equation}
The quantity
\[
  \frac{R^2}{2}\bigl(\theta-\sin\theta\bigr)
\]
is the ordinary oriented area enclosed by the arc and its reverse chord.
Thus, the depth-two homogeneous component records the negative of that area in the
orientation convention used here.  For a full circle, \(\theta=2\pi\),
we have
\[
  H_1[\gamma_{R,2\pi}]=0,
  \qquad
  H_2[\gamma_{R,2\pi}]=-\pi R^2Z.
\]
The horizontal endpoint has returned to the origin, but the developed
endpoint retains the area swept out by the rotating horizontal direction.
Since \(\gamma_{R,\theta}\) is Lipschitz and based at the origin, Proposition~\ref{prop:subsequential-realization} realizes it as the macroscopic horizontal path of a bounded input along some sequence \(T_j\to\infty\).  Along that sequence,
\[
  \frac{H_2[T_j]}{T_j^2}
  \longrightarrow
  -\frac{R^2}{2}\bigl(\theta-\sin\theta\bigr)Z.
\]
\end{example}

The sequence \(T_j\) is not merely selecting points along the trajectory. Each \(T_j\) determines a new blow-down of the entire path on \([0,T_j]\), and along the chosen subsequence, these rescaled paths converge to the same circular arc. The reader may find it helpful to work through the rescaling explicitly for \(T_j=j!\), for which the contribution of the preceding stage occupies a vanishing fraction \(T_{j-1}/T_j=1/j\) of the macroscopic time interval. 

For a bounded horizontal input with ordinary mean
\(\mu=\bar fX+\bar gY\), the rescaled horizontal path converges to the
straight segment \(u\mapsto u\mu\).  Therefore,
\begin{equation}
  \delta_{1/t}H[t]
  =\frac{F(t)}{t}X+\frac{G(t)}{t}Y+\frac{A(t)}{2t^2}Z
  \longrightarrow \bar fX+\bar gY,
  \label{eq:Heis-Carnot-normalized}
\end{equation}
and in particular
\[
  \frac{A(t)}{t^2}\longrightarrow0.
\]
This means the first possible nonzero Heisenberg defect is the linear
area rate \(A(t)/t\).  Its existence is not implied by the ordinary mean;
orbit-endpoint terms and ordered correlations enter at precisely that scale.
The existence of this linear area rate is a separate asymptotic question.
Section~\ref{sec:path-defects} places it in the period array, while
Section~\ref{sec:BM-correction} explains the closing correction required
for a homotopy interpretation.

\begin{example}[Closed commutator-loop check in Heisenberg]
\label{ex:heis-commutator-loop}
Assume that \(X,Y,Z\) belong to a Mal'cev basis compatible with
\(\Gamma\)~\cite{Mal49}, normalized so that \(\exp X,\exp Y,\exp Z\in\Gamma\).  For
\(N\in\mathbb N\), the group commutator
\[
  \eta_N=\exp(NX)\exp(NY)\exp(-NX)\exp(-NY)
\]
has
\[
  \log\eta_N=N^2Z,
  \qquad
  \delta_{1/N}\log\eta_N=Z.
\]
Since \(\eta_N=\exp(N^2Z)\in\Gamma\), the corresponding path projects
under \(H\to H/\Gamma\) to a closed loop.  This verifies directly that the Lie integral is the global logarithm
of the nilpotent fundamental-group element and that its second Carnot
layer is the commutator class.
\end{example}

\subsection{Engel geometry: the first moment of area}
\label{sec:engel}

Let \(\ee\) be the Engel Lie algebra with basis
\(X_1,X_2,X_3,X_4\) and brackets
\[
  [X_1,X_2]=X_3,
  \qquad
  [X_1,X_3]=X_4,
\]
all other basis brackets being zero.  Its stratification is
\[
  V_1=\operatorname{span}\{X_1,X_2\},
  \qquad
  V_2=\mathbb RX_3,
  \qquad
  V_3=\mathbb RX_4,
\]
with Carnot dilation
\[
  \delta_\lambda(x_1X_1+x_2X_2+x_3X_3+x_4X_4)
  =\lambda x_1X_1+\lambda x_2X_2+\lambda^2x_3X_3+\lambda^3x_4X_4.
\]
Let \(E\) be the simply connected Engel Lie group and fix a lattice compatible
with this rational structure.  The asymptotic calculations take place in
first-kind logarithmic coordinates, where the exponential map and Carnot
dilations are global.

\subsubsection{The Engel recursion}
\label{sec:engel-recursion}

For
\[
  K(s)=f(s)X_1+g(s)X_2,
  \qquad
  F(t)=\int_0^tf(s)\,ds,
  \qquad
  G(t)=\int_0^tg(s)\,ds,
\]
we have
\[
  H_1[t]=F(t)X_1+G(t)X_2.
\]
Define
\begin{equation}
  A(t)=\int_{0<u_2<u_1<t}
  \bigl(f(u_1)g(u_2)-g(u_1)f(u_2)\bigr)\,du_2\,du_1.
  \label{eq:EngelA}
\end{equation}
Then
\begin{equation}
  H_2[t]=\frac12A(t)X_3.
  \label{eq:H2-Engel}
\end{equation}
The next ordered simplex coefficient is
\begin{equation}
  B(t)=\int_{0<u_3<u_2<u_1<t}
  f(u_3)\bigl(g(u_1)f(u_2)-f(u_1)g(u_2)\bigr)
  \,du_3\,du_2\,du_1.
  \label{eq:EngelB}
\end{equation}
The depth-three recursion gives
\begin{equation}
  H_3[t]
  =\left(\frac13B(t)+\frac1{12}F(t)A(t)\right)X_4.
  \label{eq:H3-Engel}
\end{equation}
This formula is the first place where the distinction between a raw
simplex term and the complete layer is unavoidable.  The depth-three
geometry receives contributions both from \(B(t)\) and
from the Chacon--Fomenko recursion term coupling the horizontal displacement
\(H_1[t]\) with the depth-two area component \(H_2[t]\), namely the
displacement--area bracket term \(\frac1{12}F(t)A(t)X_4\).

\subsubsection{Chacon--Fomenko Stokes and the Engel moment}
\label{sec:stokes-heis-engel}

Write \(X=X_1\), \(Y=X_2\), \(Z=X_3\), and \(W=X_4\).  For the pair form
\[
  \omega=X\,dx+Y\,dy,
  \qquad
  \mathcal A=-\omega,
\]
the curvature is
\[
  \mathcal F=\widehat d(-\mathcal A)=-Z\,dx\wedge dy.
\]
In the Heisenberg quotient \(Z\) is central, so the surface term is just
signed area.  In the Engel Lie algebra, write \(P_g\) for parallel transport
along the standard right-angle path.  Then
\[
  P_g^{-1}ZP_g=Z-xW,
\]
and hence
\begin{equation}
  \mathcal F^*=-Z\,dx\wedge dy+xW\,dx\wedge dy.
  \label{eq:Engel-twisted-curvature}
\end{equation}
For the closed planar loop obtained by projecting the horizontal path to
\(V_1\cong\mathbb R^2\) and adjoining the reverse chord, write
\(\Sigma\) for the corresponding oriented filling.  Then
\begin{equation}
  z_{\mathrm{cl}}=-\iint_\Sigma dx\wedge dy,
  \qquad
  w_{\mathrm{cl}}=\iint_\Sigma x\,dx\wedge dy.
  \label{eq:Engel-closed-moments}
\end{equation}
Thus, the second layer is signed area and the third layer is its first
\(x\)-moment.

For an open horizontal segment with logarithm
\[
  H[t]=FX+GY+zZ+wW,
\]
closing by the straight horizontal chord and applying 3-step BCH gives
\begin{equation}
  w_{\mathrm{cl}}=w-\frac12Fz,
  \qquad
  w=w_{\mathrm{cl}}+\frac12Fz.
  \label{eq:Engel-open-closed-correction}
\end{equation}
This yields
\begin{equation}
  w=\iint_\Sigma\left(x-\frac12F(t)\right)dx\wedge dy.
  \label{eq:Engel-open-moment-formula}
\end{equation}
Thus, the complete Engel coordinate is a corrected first moment of
signed area.  The correction is the BCH
term required to pass from the closed boundary moment to the open-path
logarithm, and it is the geometric counterpart of the
\(F(t)A(t)/12\) term in \eqref{eq:H3-Engel}.

\begin{example}[The corner in the Engel group]
\label{ex:engel-corner}
Specialize the corner \eqref{eq:corner-path} to \(X=X_1\) and
\(Y=X_2\).  Since \([Y,[Y,X]]=0\) in the Engel Lie algebra,
\eqref{eq:corner-development}--\eqref{eq:corner-H3} give
\begin{equation}
  \log(e^{bY}e^{aX})
  =aX+bY-\frac12abZ+\frac1{12}a^2bW.
  \label{eq:Engel-corner-BCH}
\end{equation}
Thus,
\[
  H_2[\gamma_{a,b}^{X,Y}]= -\frac12abZ,
  \qquad
  H_3[\gamma_{a,b}^{X,Y}]= \frac1{12}a^2bW.
\]
The closing triangle has area \(ab/2\) and first \(x\)-moment
\(a^2b/3\).  Formula~\eqref{eq:Engel-open-closed-correction} gives
\[
  w=\frac13a^2b+\frac12a\left(-\frac12ab\right)
   =\frac1{12}a^2b,
\]
in agreement with BCH.  The same turn that creates a nonzero signed area
at depth two creates a nonzero corrected first area moment at
depth three.  Proposition~\ref{prop:subsequential-realization} realizes this corner as a macroscopic horizontal path of a bounded input along some sequence \(T_j\to\infty\), and both quantities survive on their corresponding Carnot layers.
\end{example}

\begin{example}[Engel commutator check]
\label{ex:engel-commutator-loop}
For
\[
  \eta_T=\exp(TX)\exp(TY)\exp(-TX)\exp(-TY),
\]
the 3-step BCH formula gives
\[
  \log\eta_T=T^2Z+\frac12T^3W,
  \qquad
  \delta_{1/T}\log\eta_T=Z+\frac12W.
\]
This closed-loop calculation independently verifies that once the
second-layer commutator is noncentral, the same fundamental-group endpoint
already carries third-layer Carnot data.
\end{example}

\subsection{Carnot normalization and defect geometry}
\label{sec:first-layers-defect-geometry}

For the rescaled horizontal path
\[
  \ell_T(u)=\frac{\ell(Tu)-\ell(0)}{T},
  \qquad 0\leq u\leq1,
\]
homogeneity gives the identity
\[
  H_k[\ell_T]=\frac{H_k[T]}{T^k}.
\]
If \(T_j\to\infty\) and \(\ell_{T_j}\to\ell^\infty\) uniformly with
uniformly bounded variation, then
\[
  \frac{H_k[T_j]}{T_j^k}\longrightarrow H_k[\ell^\infty].
\]
The corner and circular-arc examples give realized bounded-input families for which these limits are
nonzero in the higher Carnot layers because the macroscopic horizontal
path along the chosen subsequence retains noncommuting chronological increments.  The Engel corner
shows that depth-three data can survive at the same leading Carnot scale.

A limit of the full family as \(T\to\infty\) is necessarily straight by
Proposition~\ref{prop:full-macroscopic-rigidity}.  In particular, for bounded
horizontal inputs with an ordinary mean,
\(\ell^\infty(u)=u\mu\), and so
\[
  \frac{H_k[T]}{T^k}\longrightarrow0,
  \qquad k\geq2.
\]
Thus, the nonstraight examples above are not merely intrinsic horizontal path
models: Proposition~\ref{prop:subsequential-realization} realizes each of them as a macroscopic horizontal path of a bounded input along a subsequence.  They cannot be
full macroscopic horizontal paths under this normalization.  In the full-limit regime, the
vanishing of the higher diagonal does not eliminate the area and
area-moment geometry.  It says that those quantities live in the manner
by which the Carnot-scaled endpoint approaches the limiting point
\(\exp\mu\) in the asymptotic cone.

The period-array regime studies this approach along a chosen scale
sequence through integral powers of \(T_j^{-1}\).  More generally, a first
defect along such a sequence may have the form
\[
  \frac{H_k[T_j]}{T_j^k}-H_k[\ell^\infty]
  =\varepsilon_jD_k+o(\varepsilon_j),
  \qquad D_k\in V_k,
\]
for a scale \(\varepsilon_j\to0\) that need not be an integral power of
\(T_j^{-1}\).  The Carnot degree \(k\) specifies the graded direction in
which the defect lives; the sequence \(\varepsilon_j\) specifies how rapidly
the corresponding logarithmic coordinate approaches its limiting value.
In the full-limit regime, this specializes to the approach toward the
straight-path value \(H_k[\ell^\infty]=0\) for \(k\geq2\).

The ruled formulas make this distinction concrete.  If the horizontal
velocity is bounded and
\[
  \sup_{0\leq s\leq t}\lVert R_t(s)\rVert=O(t^\alpha),
  \qquad 0\leq\alpha<1,
\]
then \eqref{eq:area-ruled-one-parameter} and
\eqref{eq:xmoment-ruled-one-parameter} give the geometric bounds
\[
  H_2[t]=O(t^{1+\alpha}),
  \qquad
  H_3[t]=O(t^{2+\alpha}),
\]
subject to the cancellations built into the complete homogeneous components.  The
bounded-deviation case \(\alpha=0\) is the scale at which a linear area
term and a quadratic Engel moment naturally appear.  Fractional or
logarithmically modified deviation rates lead to corresponding non-power
defect scales.

Section~\ref{sec:path-defects} focuses on the structured case in
which the successive defects occur in integral powers and form the
period array.  The present paper does not impose general dynamical
hypotheses guaranteeing the existence of these limits.  The finite-scale geometry
established here remains valid whether the defect expansion is polynomial,
non-power, oscillatory, or only subsequential.

\section{Flat development and closed trajectories}
\label{sec:closed-trajectories}

The preceding sections concern open horizontal paths and their nilpotent developments.  We now introduce the topological setting in which a developed endpoint becomes the holonomy of a loop.  The order is deliberate: the asymptotic-development theorem is independent of the choice of a manifold or a fundamental-group representation, while the topological statement arises when an open trajectory is closed and its development is identified with flat holonomy.
This finite-dimensional nilpotent correspondence belongs to the broader de Rham theory of Mal'cev completion; see, for example, Hain \cite{Hain98}.
\subsection{The nilpotent \texorpdfstring{$\pi_1$}{pi1} de Rham correspondence}
\label{sec:exact-pi1-derham}

Let \(M\) be a connected smooth manifold with base point \(x_0\), and
let
\[
  p:\widetilde M\longrightarrow M
\]
be its universal covering projection.  Fix a lift \(\widetilde x_0\) of
\(x_0\), and use the right action of \(\pi_1(M,x_0)\) on
\(\widetilde M\).  Let \(G\) be a connected, simply connected nilpotent
Lie group, and write \(\g\) for its Lie algebra.  Since \(G\) is
connected, simply connected, and nilpotent, the exponential map
\[
  \exp:\g\longrightarrow G
\]
is a global diffeomorphism, and
\(\log:G\to\g\) denotes its globally defined inverse.

Let
\[
  \theta^R=dg\,g^{-1}
\]
be the right Maurer--Cartan form.  For a \(\g\)-valued one-form
\(\mathcal B\), write
\[
  \widehat d\mathcal B=d\mathcal B+\mathcal B\wedge\mathcal B.
\]
The curvature of a \(\g\)-valued connection form
\(\mathcal A\) is
\[
  \mathcal F=\widehat d(-\mathcal A).
\]
Flatness means \(\widehat d(-\mathcal A)=0\).

If \(\gamma:[a,b]\to M\) is a path,
\(\widetilde\gamma:[a,b]\to\widetilde M\) is a lift of this path, and
\(g=D\circ\widetilde\gamma\) is the developed trajectory, then the
pullback of the right Maurer--Cartan form satisfies
\[
  g^*(dg\,g^{-1})=K(t)\,dt,
\]
where \(K(t)\) is the Lie-algebra-valued function appearing in the
Chacon--Fomenko differential equation \cite{CF91a}.  Thus, \(K(t)\) is  the
coefficient of the pullback of the universal right Maurer--Cartan form
along the developed trajectory.

\begin{theorem}[Based nilpotent \(\pi_1\) de Rham correspondence]
\label{thm:nilpotent-pi1-derham}
There is a natural correspondence between homomorphisms
\[
  \rho:\pi_1(M,x_0)\longrightarrow G
\]
and based gauge-equivalence classes of flat \(\g\)-valued connection
forms on \(M\), where the based gauge transformations are required to be the
identity at \(x_0\).  More precisely, a representation \(\rho\) admits a
normalized smooth developing map
\[
  D:\widetilde M\longrightarrow G,
  \qquad
  D(\widetilde x\cdot\gamma)=D(\widetilde x)\rho(\gamma),
  \qquad
  D(\widetilde x_0)=e,
\]
for which the form \(D^*\theta^R\), invariant under deck
transformations, descends to a flat form
\(\mathcal A\in\Omega^1(M;\g)\).  Conversely, every flat form
\(\mathcal A\) has a unique normalized developing map satisfying
\[
  D^*\theta^R=p^*\mathcal A,
  \qquad D(\widetilde x_0)=e,
\]
and its equivariance homomorphism is the holonomy representation of
\(\mathcal A\).

For every absolutely continuous path \(c:[0,1]\to M\) with \(c(0)=x_0\),
and for the lift \(\widetilde c\) with
\(\widetilde c(0)=\widetilde x_0\), the Lie integral satisfies the
exact endpoint identity
\begin{equation}
  \exp\!\left(\Lint_c\mathcal A\right)
  =D(\widetilde c(1))D(\widetilde c(0))^{-1}.
  \label{eq:exact-endpoint}
\end{equation}
If \(c\) is a based loop and \([c]\in\pi_1(M,x_0)\) is its based
homotopy class, then
\begin{equation}
  \exp\!\left(\Lint_c\mathcal A\right)=\rho([c]),
  \qquad
  \Lint_c\mathcal A=\log\rho([c]).
  \label{eq:exact-pi1-derham}
\end{equation}

\end{theorem}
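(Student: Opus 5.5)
The plan is to build both directions of the correspondence from the developing map, then read off the endpoint identity from the path-ordered equation \eqref{eq:path-ordered-ode} together with the definition of the Chacon--Fomenko Lie integral as the global logarithm of its solution.

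\emph{From representations to flat forms.} Given \(\rho\), I will construct an equivariant smooth map \(D:\widetilde M\to G\) with \(D(\widetilde x\cdot\gamma)=D(\widetilde x)\rho(\gamma)\). Since \(G\) is diffeomorphic to \(\g\) via \(\exp\), the associated flat bundle \(\widetilde M\times_\rho G\to M\) has contractible fibre, so it admits a smooth section. Equivalently, one can build an equivariant map by a partition of unity on \(M\), averaging in logarithmic coordinates on local lifts; alternatively, obstruction theory gives existence. Left translation by \(D(\widetilde x_0)^{-1}\) preserves right-equivariance and achieves the normalization \(D(\widetilde x_0)=e\). The form \(D^*\theta^R\) is deck-invariant because \(\theta^R=dg\,g^{-1}\) is invariant under right translation:
\[
(R_h)^*\theta^R=d(gh)(gh)^{-1}=dg\,g^{-1}.
\]
Hence \(D^*\theta^R\) descends to a form \(\mathcal A\) on \(M\). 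Flatness follows from the Maurer--Cartan equation for \(\theta^R\), namely \(d\theta^R=\theta^R\wedge\theta^R\) in the sign convention matching \(\widehat d\), which gives \(\widehat d(-\mathcal A)=0\). Two normalized developing maps for the same \(\rho\) differ by a map \(\widetilde M\to G\) that is invariant under the deck action and equals \(e\) at \(\widetilde x_0\). Such a map descends to a based gauge transformation, which establishes the based gauge-equivalence statement. I will check the precise sign conventions against \(\mathcal F=\widehat d(-\mathcal A)\) carefully, since this is where errors creep in.

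\emph{From flat forms to representations.} Given a flat form \(\mathcal A\), I will solve \(D^*\theta^R=p^*\mathcal A\) on \(\widetilde M\). The first step is to consider the Pfaffian system on \(\widetilde M\times G\) defined by \(dg\,g^{-1}=p^*\mathcal A\). Flatness makes this system Frobenius integrable, so local solutions exist. Since \(\widetilde M\) is simply connected, the monodromy is trivial and the local solutions glue to a global solution with \(D(\widetilde x_0)=e\). Uniqueness holds because two solutions \(D,D'\) satisfy \(d(D^{-1}D')=0\). For a deck transformation \(\gamma\), the map \(\widetilde x\mapsto D(\widetilde x\gamma)\) satisfies the same equation, so it equals \(D\cdot h\) with \(h=D(\widetilde x_0\gamma)\). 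This defines \(\rho(\gamma)\), and it is a homomorphism by the right-action convention. That \(\rho\) is the holonomy of \(\mathcal A\) follows from the endpoint identity below.

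\emph{Endpoint identity.} For an absolutely continuous path \(c\) with lift \(\widetilde c\), set \(g(t)=D(\widetilde c(t))\). The chain rule gives
\[
g'(t)g(t)^{-1}=\mathcal A(\dot c(t))=:K(t),
\]
valid almost everywhere. Then \(U(t)=g(t)g(0)^{-1}\) solves \eqref{eq:path-ordered-ode} with \(U(0)=e\). Uniqueness for this linear Carathéodory ODE, in a faithful matrix representation of the nilpotent group or directly in exponential coordinates, then identifies \(U(1)\) with \(\exp\Lint_c\mathcal A\). This proves \eqref{eq:exact-endpoint}.

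For a based loop, \(\widetilde c(1)=\widetilde x_0\cdot[c]\), so
\[
D(\widetilde c(1))=D(\widetilde x_0)\rho([c])=\rho([c]),
\]
and since \(D(\widetilde x_0)=e\) the identity \eqref{eq:exact-pi1-derham} follows. Applying the global \(\log\) then gives the second form of \eqref{eq:exact-pi1-derham}.

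The main obstacle I expect is the global integration in the second direction, together with the regularity issue of applying it to merely absolutely continuous paths. Nilpotency removes any completeness concerns, because the ODE is polynomial in exponential coordinates, so I will handle the construction by path-lifting: define \(D(\widetilde x)\) by solving the ODE along any path from \(\widetilde x_0\), with homotopy invariance coming from flatness via the nonabelian Stokes identity \eqref{eq:stokes}.
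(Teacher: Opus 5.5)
Your proposal is correct and follows essentially the same route as the paper: a section of the flat bundle with contractible fibre $G$, descent of $D^*\theta^R$ by right-invariance, flatness from the Maurer--Cartan equation, global integration of $p^*\mathcal A$ on the simply connected cover with uniqueness up to a constant right factor (which also produces $\rho$ from the deck action), and the endpoint identity from uniqueness for $U'=K_cU$. The one thing to drop is your parenthetical alternative of averaging $\log D_\alpha$ with a partition of unity. Once the step is at least three, $\operatorname{BCH}(\cdot,\log\rho(\gamma))$ is not affine in its first argument, so that naive average is not $\rho$-equivariant; rely on the contractible-fibre section argument, as the paper does.
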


\begin{proof}
Given \(\rho\), form the associated flat principal \(G\)-bundle.  A
connected, simply connected nilpotent group is diffeomorphic to its Lie
algebra and hence contractible, so this bundle admits a smooth
trivialization.  After normalizing at \(x_0\), the corresponding section
gives the \(\rho\)-equivariant map \(D\).  Right invariance of
\(\theta^R\) makes \(D^*\theta^R\) invariant under the right deck action,
so it descends uniquely to a form \(\mathcal A\) on \(M\).  The
Maurer--Cartan equation gives \(\widehat d(-\mathcal A)=0\).

Conversely, let \(\mathcal A\) be flat.  Then \(p^*\mathcal A\) is flat
on \(\widetilde M\).  Since \(\widetilde M\) is simply connected, the global
Maurer--Cartan integration theorem gives a unique smooth map \(D\),
normalized by \(D(\widetilde x_0)=e\), such that
\(D^*\theta^R=p^*\mathcal A\).  For each deck transformation \(\gamma\),
the maps \(\widetilde x\mapsto D(\widetilde x\cdot\gamma)\) and
\(\widetilde x\mapsto D(\widetilde x)\) have the same right logarithmic
derivative.  So, they differ by a constant right factor
\(\rho(\gamma)\):
\[
  D(\widetilde x\cdot\gamma)=D(\widetilde x)\rho(\gamma).
\]
The right-action convention makes \(\rho\) a homomorphism, and this is
the holonomy representation of \(\mathcal A\).  Conversely, a normalized
\(\rho\)-equivariant developing map has deck-invariant pullback
\(D^*\theta^R\), which descends uniquely to the corresponding flat form.

Along a lifted path, let
\[
  K_c(t)=\mathcal A_{c(t)}(\dot c(t)),
\]
defined almost everywhere; since \(c\) is absolutely continuous and
\(\mathcal A\) is smooth, \(K_c\) is integrable.  The lift
\(\widetilde c\) is absolutely continuous, so
\[
  U(t)=D(\widetilde c(t))D(\widetilde c(0))^{-1}
\]
is absolutely continuous and solves \(U'(t)=K_c(t)U(t)\) almost
everywhere, with \(U(0)=e\).  By uniqueness for this linear equation
with integrable coefficient, \(U\) is the path-ordered exponential of
\(K_c\).  The Lie integral of \(K_c(t)\) is the logarithm of
\(U(1)\), proving \eqref{eq:exact-endpoint}.  If \(c\) is a based loop, then
\(\widetilde c(1)=\widetilde x_0\cdot[c]\), and equivariance gives
\(U(1)=\rho([c])\).  Applying the global inverse
\(\log:G\to\g\) proves \eqref{eq:exact-pi1-derham}.  Changing the
trivialization by a gauge transformation that is the identity at
\(x_0\) preserves \(\rho\); allowing a nontrivial value at the base point
conjugates it.
\end{proof}

Thus, Theorem~\ref{thm:nilpotent-pi1-derham} interprets the Lie
integral as the logarithmic integration of the universal
right Maurer--Cartan form along the developed trajectory in \(G\).

The nilpotent \(\pi_1\)-de Rham correspondence is a finite-time topological statement:
it identifies the logarithmic endpoint of every finite based loop before
any long-time normalization is introduced.  The asymptotic problem begins
only after Carnot dilation is applied to the finite-time endpoints of long paths
or of long orbit segments completed by controlled closings.  The nilpotent \(\pi_1\)-de Rham
correspondence and the asymptotic-development theorem thus play different
roles.  They are combined only in
Theorem~\ref{thm:carnot-pi1-derham}, after bounded-closing invariance has
been established and after the compatibility between the flat development
and the horizontal input has been stated explicitly.  The additional
requirements for a full asymptotic correspondence are discussed in
Section~\ref{sec:conclusion}.

\subsection{Full flat development forms and horizontal inputs}
\label{sec:full-versus-horizontal}

Suppose now that \(\g=V_1\oplus\cdots\oplus V_n\) is graded.  A full flat
development form decomposes as
\[
  \mathcal A=\mathcal A_1+\cdots+\mathcal A_n,
  \qquad
  \mathcal A_j\in\Omega^1(M;V_j).
\]
The degree-one part of flatness says that \(\mathcal A_1\) is closed; the
higher degree equations couple \(d\mathcal A_j\) to brackets of the lower
components.  In particular, the higher components of a flat nilpotent
connection are generally not closed scalar forms considered separately.

Most explicit calculations in this paper begin instead with a prescribed
horizontal input
\[
  K(t)=\omega_{\gamma(t)}(\dot\gamma(t))\in V_1,
  \qquad
  \omega\in\Omega^1(M;V_1).
\]
In a dynamical model, the same input may be obtained from a continuous
function \(F:X\to V_1\) on a state space carrying a flow \(\phi^t\), by
setting \(K_x(t)=F(\phi^t x)\).  Such an input may be the first layer of a
flat development form, but it need not be the evaluation of the full form
\(\mathcal A\).  Its Lie integral is still the logarithmic
endpoint of the corresponding developed path, equivalently the logarithm of \(U(1)\), where \(U:[0,T]\to G\) is the unique solution of 
\[
U'(t)=K_x(t)U(t), \qquad U(0)=e.
\]  
What is not
automatic is the topological identification of that endpoint with
\(\log\rho([c])\).  The notation \([c]\in\pi_1(M,x_0)\) applies only when
\(c\) is a based loop, and the identity
\(\Lint_c\mathcal A=\log\rho([c])\) requires the full flat development
form, or an explicitly compatible path-holonomy construction.

This distinction is essential later.  The nilpotent
\(\pi_1\)-de Rham correspondence concerns flat development forms and their
holonomy.  The period array and defect profiles are defined for chosen
horizontal paths and scale sequences whose chronological increments are encoded by the Chacon--Fomenko
recursion.  In applications, the two structures are linked.
Example~\ref{ex:natural-class} exhibits the basic class in which
they coincide by construction: horizontal trajectories on a compact
nilmanifold developed by the natural flat form.

\subsection{Endpoint ordering and the curvature convention}
\label{sec:endpoint-ordering-curvature}

For \(X,Y\in\g\), the Baker--Campbell--Hausdorff expression
\[
  \operatorname{BCH}(X,Y)=\log\!\bigl(\exp X\,\exp Y\bigr)
\]
is a finite Lie polynomial.  The chronological concatenation convention is the one fixed in equation~\eqref{eq:CF-concatenation-BCH}: if \(c_1\) is traversed first and \(c_2\) second, the later factor multiplies on the left.

The flat right Maurer--Cartan development form in
Theorem~\ref{thm:nilpotent-pi1-derham} has zero curvature.  The
nonzero curvature calculations elsewhere in the paper concern auxiliary
two-generator horizontal forms on selected coordinate planes, not the
full flat development form.  These forms are introduced in
Section~\ref{sec:simplices-stokes}; their first-layer geometric
interpretation is summarized in Section~\ref{sec:geometry-first-layers},
and the Heisenberg and Engel Stokes calculations are carried out in
Section~\ref{sec:stokes-heis-engel}.  The corresponding independent
third-layer directions in the free model are discussed in
Section~\ref{sec:free-three-step}.  

When \(G\) is a vector group, BCH becomes ordinary addition and
\eqref{eq:exact-pi1-derham} reduces to the classical period map
\([c]\mapsto\int_c\alpha\).  Thus, Theorem~\ref{thm:nilpotent-pi1-derham} is the nonabelian period statement appropriate to this paper: the
Lie integral supplies logarithmic coordinates for multiplicative
holonomy.

The asymptotic-development theorem, Theorem~\ref{thm:macroscopic-path},
separates the leading path geometry along a convergent scale sequence
from the effect of closing an orbit segment.  In a nonabelian development, history matters: two horizontal
paths with the same endpoint in \(V_1\) can have different developed
endpoints in \(G\), because the higher layers retain information
about the chronological order of their increments.  Bounded closings
vanish after unit-interval and Carnot rescaling, but they can enter
subleading terms through the BCH law.  In the rest of this section, we use the finite-time holonomy statement, the asymptotic-development theorem, and bounded-closing invariance to produce a Carnot-scale nilpotent \(\pi_1\)-de Rham theorem.

\subsection{Finite-time homotopy data and the role of a filling}
\label{sec:invariance}

Let \(c:[0,1]\to M\) be a loop based at \(x_0\).  For the flat right
Maurer--Cartan development form of Section~\ref{sec:nilpotent-development}, its developed
endpoint is the holonomy element
\[
  \exp\!\left(\Lint_c\mathcal A\right)=\rho([c]),
  \qquad
  \Lint_c\mathcal A=\log\rho([c]).
\]
If \(c_0\) and \(c_1\) are homotopic through loops based at \(x_0\),
then \([c_0]=[c_1]\) in \(\pi_1(M,x_0)\), so their developed endpoints
in \(G\) coincide.  Equivalently, since \(\exp:\g\to G\) is a global
diffeomorphism, their Lie integrals coincide.  This is the nilpotent \(\pi_1\)-de Rham statement and does not require choosing a
spanning surface.

The auxiliary two-generator horizontal connection forms
\[
  \mathcal A_{ij}=-\omega_{ij}
\]
introduced in Section~\ref{sec:simplices-stokes} for the Heisenberg and
Engel calculations are not the full flat Maurer--Cartan development
form.  Their nonzero curvatures allow us to express the same boundary
logarithm geometrically in terms of signed area or a corrected first moment of
signed area.  The nilpotent specialization of the Chacon--Fomenko
nonabelian Stokes theorem, equation~\eqref{eq:CF-Stokes-pair}, identifies
the Lie integral of the complete oriented boundary with the surface
Lie integral of the twisted curvature over the chosen parameterized
filling.

The asymptotic issue is different: an open orbit segment must be closed
before it represents an element of \(\pi_1\).  A bounded closing can alter
subleading logarithmic terms through BCH, but it disappears from the
leading Carnot-scaled limit.  The next proposition makes this precise.

\subsection{Invariance under bounded closings}
\label{sec:perturbation}

\begin{proposition}[Invariance of Carnot-scale limits under bounded factors]
\label{prop:perturbation}
Let \(G\) be a simply connected graded \(n\)-step nilpotent group and
write
\[
  g(T)=\exp H[T],\qquad H[T]=\sum_{k=1}^nH_k[T],
  \qquad H_k[T]=O(T^k).
\]
Let
\[
  b_i(T)=\exp Q_i(T),\qquad Q_i(T)=\sum_{k=1}^nQ_{i,k}(T),
  \qquad i=1,2,
\]
where \(Q_1(T)\) and \(Q_2(T)\) are uniformly bounded, and let
\[
  \widetilde g(T)=b_1(T)\,g(T)\,b_2(T).
\]
With the Chacon--Fomenko chronological convention, a factor appended
after the segment multiplies on the left and one preceding it on the
right.  Then
\[
  \widetilde H[T]=\log\widetilde g(T)
\]
satisfies
\[
  \widetilde H_k[T]-H_k[T]=O(T^{k-1}),
  \qquad 1\leq k\leq n.
\]
Consequently,
\[
  \delta_{1/T}\widetilde H[T]
  -\delta_{1/T}H[T]
  \longrightarrow0
  \qquad\text{as }T\to\infty.
\]
\end{proposition}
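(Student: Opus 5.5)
The plan is to use the fact that $\widetilde H[T]=\operatorname{BCH}(Q_1(T),\operatorname{BCH}(H[T],Q_2(T)))$ is a finite Lie polynomial, and to count Carnot degrees term by term. By the BCH formula in an $n$-step nilpotent Lie algebra,
\[
  \operatorname{BCH}(X,Y)=X+Y+\sum_{w}c_w\,w(X,Y),
\]
where $w$ ranges over finitely many iterated brackets of length at least two in $X$ and $Y$, each containing at least one $X$ and at least one $Y$. I will first treat a single right factor, $\operatorname{BCH}(H,Q)$ with $Q=Q_2(T)$ bounded, then apply the same argument to the left factor with $H$ replaced by $H'=\operatorname{BCH}(H,Q_2)$. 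The left factor is handled identically because $\operatorname{BCH}(Q_1,H')$ has the same structure with the roles of the arguments exchanged. For this second step I only need that $H'_k=O(T^k)$, and this will be established in the first step.

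The key step is a degree-counting estimate. Expand each argument into its homogeneous components, $H=\sum_k H_k$ and $Q=\sum_k Q_k$, and use multilinearity of the bracket. A bracket term $w$ then becomes a sum of brackets of homogeneous pieces. If such a piece uses components $H_{a_1},\dots,H_{a_p}$ and $Q_{b_1},\dots,Q_{b_q}$, it lies in $V_m$ with
\[
  m=\sum_i a_i+\sum_j b_j,
\]
by the grading $[V_i,V_j]\subseteq V_{i+j}$. Using $\|[x,y]\|\le C\|x\|\|y\|$ together with $\|H_a\|=O(T^a)$ and $\|Q_b\|=O(1)$, its norm is $O(T^{\sum a_i})=O(T^{m-\sum b_j})$. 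Every correction word contains $q\ge1$ factors of $Q$, and each $b_j\ge1$, so every such contribution to layer $m$ is $O(T^{m-1})$. The linear terms contribute $H_m+Q_m$ to layer $m$, and $Q_m=O(1)=O(T^{m-1})$ for $m\ge1$ once $T\ge1$. There are only finitely many words and finitely many index tuples, so summing gives $\operatorname{BCH}(H,Q)_m-H_m=O(T^{m-1})$. In particular $H'_m=O(T^m)$, which is what the second step requires.

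Applying the same estimate to the left factor and combining gives $\widetilde H_k[T]-H_k[T]=O(T^{k-1})$. Dividing layer $k$ by $T^k$ then yields $\delta_{1/T}\widetilde H[T]-\delta_{1/T}H[T]=O(T^{-1})\to0$.

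I do not expect a real obstacle here. The only points needing care are the bookkeeping that every nonlinear BCH word contains at least one $Q$-factor, which holds since $\operatorname{BCH}(X,0)=X$, and that the bounds hold for large $T$, where $1\le T^{m-1}$. The chronological convention affects only which side each factor multiplies on, not the estimate.
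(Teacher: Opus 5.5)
Your proof is correct and follows essentially the same route as the paper: you expand the finite BCH product into graded Lie monomials and observe that every monomial other than $H[T]$ itself contains at least one bounded $Q$-component of graded degree $\geq 1$, so its contribution to layer $k$ is $O(T^{k-1})$. The only difference is organizational: you compose the two factors one at a time and check $H'_m=O(T^m)$ in between, whereas the paper expands the iterated BCH product in one pass and adds a group-level remark that $\delta_{1/T}b_i(T)\to e$ because the dilations are automorphisms.
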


\begin{proof}
At the group level, the reason for the invariance is immediate.  Since
\(Q_1(T)\) and \(Q_2(T)\) are bounded,
\[
  \delta_{1/T}Q_i(T)\longrightarrow0,
  \qquad
  \delta_{1/T}b_i(T)\longrightarrow e.
\]
Because the Carnot dilations are group automorphisms,
\[
  \delta_{1/T}\widetilde g(T)
  =\delta_{1/T}b_1(T)\,\delta_{1/T}g(T)\,\delta_{1/T}b_2(T),
\]
so both bounded factors disappear at the leading Carnot scale.

The BCH expansion gives the sharper layerwise estimate.  Decompose each
BCH Lie monomial into its homogeneous graded components.  Apart from
\(H[T]\) itself, every monomial contributing to
\[
  \widetilde H[T]-H[T]
\]
contains at least one homogeneous component of \(Q_1(T)\) or
\(Q_2(T)\).  Suppose that the total graded degree contributed by all of
its \(Q_1(T)\)- and \(Q_2(T)\)-factors is
\(q\geq1\).  Since a monomial contributing to \(V_k\) has total graded
degree \(k\), its \(H[T]\)-factors have total graded degree \(k-q\).
Each component of \(Q_1(T)\) and \(Q_2(T)\) is bounded, while
\(H_j[T]=O(T^j)\); so, by 
multilinearity and continuity of the Lie bracket, such a monomial is \(O(T^{k-q})\), and since \(q\geq 1\), it is also \(O(T^{k-1}).\)   The iterated BCH expansion is finite, so summing its terms yields
\[
  \widetilde H_k[T]-H_k[T]=O(T^{k-1}).
\]
After applying \(\delta_{1/T}\), the difference in each graded component
is \(O(T^{-1})\), and hence tends to zero.
\end{proof}

\begin{remark}[Subsequential limits]
The proof gives more than preservation of an already existing limit.  For
every sequence \(T_j\to\infty\),
\[
  \delta_{1/T_j}\widetilde H[T_j]
  -\delta_{1/T_j}H[T_j]
  \longrightarrow0.
\]
Thus, the two normalized families have the same convergence behaviour
along every sequence and exactly the same set of subsequential limits.
There is no reason for this common set to consist of a single point:
different macroscopic subsequences may have different Carnot-scale
developments.  At the horizontal level, Proposition~\ref{prop:subsequential-realization} shows that, across the class of bounded inputs, there is no restriction on subsequential macroscopic shape beyond being Lipschitz and based at the origin.
\end{remark}

\begin{remark}[Subleading dependence on the closing]
Proposition~\ref{prop:perturbation} concerns only the leading Carnot-scale
behaviour.  A bounded closing changes the \(V_k\)-component of the
logarithm by \(O(T^{k-1})\).  This change disappears after division by
\(T^k\), but it may contribute at the next asymptotic order.
Section~\ref{sec:path-defects} organizes the successive asymptotic coefficients of each homogeneous layer along a chosen scale sequence into the \emph{Chacon--Fomenko period array}.  When these coefficients exist,
a bounded closing may change subleading entries of that array
even though it leaves the Carnot diagonal unchanged.
\end{remark}

\subsection{The Carnot-Scale Nilpotent \texorpdfstring{$\pi_1$}{pi1} de Rham Theorem}
\label{sec:carnot-pi1-derham}

The nilpotent \(\pi_1\)-de Rham correspondence and the
asymptotic-development theorem answer different questions.  The first assigns
to each finite based loop its nilpotent holonomy.  The second
identifies the Carnot-scale limit of an open horizontal development.  An
open orbit segment does not itself determine an element of \(\pi_1\), but
a bounded closing turns it into a based loop.  Proposition~\ref{prop:perturbation}
shows that the closing changes only lower asymptotic orders.  Thus, the three
statements fit together at the leading Carnot scale.

The compatibility hypothesis in the next theorem is essential.  The
flat form \(\mathcal A\) determines the finite-time holonomy, while
the asymptotic-development theorem is formulated for a horizontal BV path in
\(V_1\).  We assume that, along the long path under consideration, these
two descriptions give the same finite-time developed endpoint.  As
explained in Section~\ref{sec:full-versus-horizontal}, this is not automatic
for an arbitrary prescribed horizontal input.

\begin{theorem}[Carnot-scale nilpotent \(\pi_1\)-de Rham theorem]
\label{thm:carnot-pi1-derham}
Let \(M\) be a connected smooth manifold with base point \(x_0\), let
\[
  \g=V_1\oplus\cdots\oplus V_n
\]
be a finite-dimensional graded \(n\)-step nilpotent Lie algebra generated
by \(V_1\), and let \(G=\exp\g\).  Let
\[
  \rho:\pi_1(M,x_0)\longrightarrow G
\]
be a representation, and let \(\mathcal A\) be a flat development form in
the corresponding based gauge class.

Let \(c:[0,\infty)\to M\) be a path with \(c(0)=x_0\), absolutely
continuous on compact intervals.  Suppose there is a continuous path
\[
  \ell:[0,\infty)\longrightarrow V_1
\]
of bounded variation on compact intervals such that, for every \(T>0\),
its development agrees with the flat development of the initial
segment \(c_T=c|_{[0,T]}\):
\begin{equation}
  H[T]
  :=H[\ell|_{[0,T]}]
  =\Lint_{c_T}\mathcal A.
  \label{eq:carnot-pi1-compatible-development}
\end{equation}
For each \(T\), choose an absolutely continuous closing path \(b_T\) from
\(c(T)\) to \(x_0\), set
\[
  \gamma_T=b_T*c_T,
\]
and assume that the logarithmic developed endpoints of the closings,
\[
  Q(T):=\Lint_{b_T}\mathcal A,
\]
remain bounded in \(\g\).

Define
\[
  \ell_T(u)=\frac{\ell(Tu)-\ell(0)}{T},
  \qquad 0\leq u\leq1.
\]
Let \(T_j\to\infty\) be a sequence.  Assume that
\(\ell_{T_j}\to\ell^\infty\) uniformly on \([0,1]\), where
\(\ell^\infty\) is a continuous BV path, and that
\[
  \sup_j\Var(\ell_{T_j};[0,1])<\infty.
\]
Then
\begin{equation}
  \delta_{1/T_j}\log\rho([\gamma_{T_j}])
  \longrightarrow
  H[\ell^\infty]
  \qquad\text{in }\g,
  \label{eq:carnot-pi1-algebra-limit}
\end{equation}
and, equivalently,
\begin{equation}
  \delta_{1/T_j}\rho([\gamma_{T_j}])
  \longrightarrow
  \exp H[\ell^\infty]
  \qquad\text{in }G.
  \label{eq:carnot-pi1-group-limit}
\end{equation}
These sequential limits are independent of the family of closing paths
among all families for which \(Q(T)\) remains bounded.  If the convergence
and variation hypotheses hold for the full family \(\ell_T\), then the
same conclusions hold as full limits as \(T\to\infty\).
\end{theorem}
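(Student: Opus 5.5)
The plan is to combine the three ingredients already in place: the finite-time correspondence of Theorem~\ref{thm:nilpotent-pi1-derham}, the bounded-factor invariance of Proposition~\ref{prop:perturbation}, and the asymptotic-development theorem, Theorem~\ref{thm:macroscopic-path}.

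\emph{Step 1: identify the closed-loop logarithm.} Since $\gamma_T=b_T*c_T$ is a based loop, \eqref{eq:exact-pi1-derham} gives $\log\rho([\gamma_T])=\Lint_{\gamma_T}\mathcal A$. The concatenation rule \eqref{eq:CF-concatenation-BCH}, applied to the flat development (via the endpoint identity \eqref{eq:exact-endpoint} and the cocycle property of $D$), gives
\[
\rho([\gamma_T])=\exp Q(T)\,\exp\Bigl(\Lint_{c_T}\mathcal A\Bigr).
\]
By the compatibility hypothesis \eqref{eq:carnot-pi1-compatible-development} this equals $\exp Q(T)\exp H[T]$. This is the situation of Proposition~\ref{prop:perturbation}, with $b_1(T)=\exp Q(T)$ bounded and $b_2=e$.

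\emph{Step 2: verify the growth hypothesis.} Proposition~\ref{prop:perturbation} needs $H_k[T]=O(T^k)$, at least along the sequence. By Proposition~\ref{prop:exact-path-scaling}, $H_k[T_j]/T_j^k=H_k[\ell_{T_j}]$. Proposition~\ref{prop:pathwise-CF-continuity}, or simply the convergence in Theorem~\ref{thm:macroscopic-path}, shows these quantities converge and are therefore bounded. For the full-family statement the same argument applies for $T$ large, since the hypotheses then give boundedness of $\Var(\ell_T)$ and $\sup\|\ell_T\|$. Alternatively, a direct bound $\|H_k[\gamma]\|\le C_k\Var(\gamma)^k$ follows from the recursion \eqref{eq:CF-recursion} together with the estimate $\|I_q\|\le\Var^q/q!$. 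The BCH argument in the proof of Proposition~\ref{prop:perturbation} is purely termwise and uses only these bounds, so it gives $\widetilde H_k[T_j]-H_k[T_j]=O(T_j^{k-1})$ along the sequence.

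\emph{Step 3: pass to the limit.} Steps 1 and 2 give $\delta_{1/T_j}\log\rho([\gamma_{T_j}])-\delta_{1/T_j}H[T_j]\to0$. Theorem~\ref{thm:macroscopic-path}, via \eqref{eq:endpoint-asymptotic-development}, gives $\delta_{1/T_j}H[T_j]\to H[\ell^\infty]$, and this proves \eqref{eq:carnot-pi1-algebra-limit}. Applying the continuous map $\exp$ and the identity $\delta_r\exp=\exp\delta_r$ yields \eqref{eq:carnot-pi1-group-limit}. The limit $H[\ell^\infty]$ does not involve $b_T$, so it is independent of any bounded family of closings. The full-limit version follows by applying the sequential statement to every sequence $T_j\to\infty$.

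\emph{Main obstacle.} The delicate point is Step 1: making the chronological order and the left/right placement of the closing factor consistent with the right-action equivariance of $D$. Concretely, one must check that $D(\widetilde\gamma_T(1))=\exp Q(T)\,D(\widetilde c(T))$, with $Q(T)$ computed along the lift of $b_T$ starting at $\widetilde c(T)$; this is where \eqref{eq:exact-endpoint} must be applied with a non-normalized starting point. The remaining care is to confirm that the $O(T^{k-1})$ estimate holds uniformly along the sequence rather than only for a fixed $T$. Since all constants in the finite BCH expansion depend only on $\sup\|Q\|$ and $\sup_j\|\delta_{1/T_j}H[T_j]\|$, I expect this to be routine.
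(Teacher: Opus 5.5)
Your proposal is correct and follows the paper's proof essentially step for step. You write $\rho([\gamma_T])=\exp Q(T)\exp H[T]$ via the endpoint identity and the chronological concatenation convention, obtain $H_k[T_j]=O(T_j^k)$ and $\delta_{1/T_j}H[T_j]\to H[\ell^\infty]$ from Theorem~\ref{thm:macroscopic-path}, apply Proposition~\ref{prop:perturbation} along the sequence, and exponentiate. Your added check that the endpoint identity holds for the closing lifted from $\widetilde c(T)$ (and is independent of the lift, by equivariance of $D$) makes explicit a step the paper uses only implicitly.
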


\begin{proof}
By the endpoint identity~\eqref{eq:exact-endpoint} and the chronological concatenation
convention,
\[
  \rho([\gamma_T])
  =\exp Q(T)\exp H[T].
\]
Hence, with
\[
  \widetilde H[T]:=\log\rho([\gamma_T]),
\]
we have
\[
  \widetilde H[T]=\operatorname{BCH}(Q(T),H[T]).
\]
The hypotheses on \(\ell_{T_j}\) and Theorem~\ref{thm:macroscopic-path} give
\[
  \delta_{1/T_j}H[T_j]\longrightarrow H[\ell^\infty]
\]
and, componentwise, \(H_k[T_j]=O(T_j^k)\).  Since \(Q(T_j)\) is bounded,
Proposition~\ref{prop:perturbation}, applied to the restricted family, gives
\[
  \delta_{1/T_j}\widetilde H[T_j]
  -\delta_{1/T_j}H[T_j]
  \longrightarrow0.
\]
This proves \eqref{eq:carnot-pi1-algebra-limit}.  Exponentiating and using
\(\delta_r(\exp X)=\exp(\delta_rX)\) gives
\eqref{eq:carnot-pi1-group-limit}.  The same argument applies to every
family of closings with bounded logarithmic developed endpoints, so the
limit is independent of that choice.
\end{proof}

\begin{corollary}[Straight macroscopic horizontal path]
\label{cor:carnot-pi1-straight}
Under the hypotheses of Theorem~\ref{thm:carnot-pi1-derham}, suppose
that
\[
  \ell^\infty(u)=u\mu
\]
for some \(\mu\in V_1\).  Write
\[
  \log\rho([\gamma_T])
  =\widetilde H_1[T]+\cdots+\widetilde H_n[T],
  \qquad \widetilde H_k[T]\in V_k.
\]
Then
\[
  \frac{\widetilde H_1[T_j]}{T_j}\longrightarrow\mu,
  \qquad
  \frac{\widetilde H_k[T_j]}{T_j^k}\longrightarrow0,
  \quad 2\leq k\leq n,
\]
and
\[
  \delta_{1/T_j}\rho([\gamma_{T_j}])\longrightarrow\exp\mu.
\]
\end{corollary}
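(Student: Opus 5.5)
This is a direct specialization of Theorem~\ref{thm:carnot-pi1-derham} combined with Corollary~\ref{cor:straight-line}. The plan is to take the conclusion \eqref{eq:carnot-pi1-algebra-limit} of the theorem as given and then identify its limit explicitly.

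First, I apply Theorem~\ref{thm:carnot-pi1-derham} under the stated hypotheses. This gives
\[
  \delta_{1/T_j}\log\rho([\gamma_{T_j}])\longrightarrow H[\ell^\infty]
\]
in \(\g\). Since \(\delta_{1/T_j}\) acts on \(V_k\) by multiplication by \(T_j^{-k}\), and the graded decomposition \(\g=V_1\oplus\cdots\oplus V_n\) is a direct sum of finite-dimensional subspaces, convergence in \(\g\) is equivalent to convergence of each homogeneous component. I therefore obtain
\[
  \widetilde H_k[T_j]/T_j^k\to H_k[\ell^\infty]
\]
for each \(1\le k\le n\). Any norm works here, because the projections onto the \(V_k\) are continuous.

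Second, I evaluate \(H[\ell^\infty]\) for the straight path \(\ell^\infty(u)=u\mu\). By Corollary~\ref{cor:straight-line}, \(H_1[\ell^\infty]=\mu\) and \(H_k[\ell^\infty]=0\) for \(k\ge2\). This is because every increment \(d\ell^\infty=\mu\,du\) is parallel to \(\mu\), so each bracket integral \(T_j[\ell^\infty]\) with \(j\ge1\) vanishes, and then the recursion \eqref{eq:CF-recursion} forces all higher \(H_k\) to vanish inductively. Substituting these values gives the two layerwise limits in the statement.

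Finally, for the group statement I use \eqref{eq:carnot-pi1-group-limit} with \(H[\ell^\infty]=\mu\), which gives \(\delta_{1/T_j}\rho([\gamma_{T_j}])\to\exp\mu\). Equivalently, this follows from continuity of \(\exp\) together with \(\delta_r\exp=\exp\delta_r\).

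No step is a genuine obstacle. The only point requiring care is the passage from convergence in \(\g\) to componentwise convergence, and that is immediate in finite dimension.
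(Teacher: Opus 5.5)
Your proposal is correct and is essentially the paper's proof. The paper also applies Theorem~\ref{thm:carnot-pi1-derham} and observes that $H[\ell^\infty]=\mu$ because every bracket of parallel increments vanishes, which is Corollary~\ref{cor:straight-line}. Your explicit passage to componentwise limits via the continuous graded projections, and your inductive check through the recursion, only spell out details the paper leaves implicit.
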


\begin{proof}
The Lie integral of the straight path \(u\mapsto u\mu\) is \(\mu\)
because every bracket of its increments vanishes.  The conclusion follows
from Theorem~\ref{thm:carnot-pi1-derham}.
\end{proof}

\begin{remark}[Geometric bounded closings]
The Carnot-scale nilpotent \(\pi_1\)-de Rham theorem is formulated directly in terms of the developed closing
quantity \(Q(T)\) for the chosen flat form.  If \(M\) is compact, \(\mathcal A\) is smooth, and the
closings have uniformly bounded length with respect to any fixed Riemannian metric on \(M\), then the corresponding path inputs have uniformly
bounded total variation.  The finite nilpotent Lie-integral formulas then imply
that \(Q(T)\) remains bounded.
\end{remark}

\begin{example}[The natural class: horizontal trajectories on a compact nilmanifold]
\label{ex:natural-class}
Let $\Gamma\subset G$ be a lattice, let $M=G/\Gamma$ with base point
$x_0=e\Gamma$, and identify the universal covering with
$p:G\to G/\Gamma$, on which $\pi_1(M,x_0)\cong\Gamma$ acts by right
translation.  The identity map $D=\operatorname{id}_G$ is a normalized
developing map: it is equivariant for the inclusion
$\rho:\Gamma\hookrightarrow G$, and $D^*\theta^R=\theta^R$ is
right-invariant, hence descends to a flat form
$\mathcal A\in\Omega^1(M;\g)$.  This is the natural instance of
Theorem~\ref{thm:nilpotent-pi1-derham}.

Call a path $c:[0,\infty)\to M$ with $c(0)=x_0$, absolutely continuous
on compact intervals, \emph{$\mathcal A$-horizontal} if
\[
  K(t):=\mathcal A_{c(t)}(\dot c(t))\in V_1
  \qquad\text{for almost every }t;
\]
equivalently, the lifted trajectory satisfies
$\widetilde c\,'(t)\,\widetilde c(t)^{-1}\in V_1$ almost everywhere.
Put $\ell(t)=\int_0^tK(s)\,ds$.  Then $K$ is locally integrable,
$\ell$ is continuous and of bounded variation on compact intervals,
and, because $K$ takes values in $V_1$, the flat development of $c_T$
and the Chacon--Fomenko Lie integral of the horizontal primitive are
the same Lie integral of the same input:
\[
  \Lint_{c_T}\mathcal A
  =\Lint_0^TK(t)\,dt
  =H\bigl[\ell|_{[0,T]}\bigr].
\]
Thus, the compatibility hypothesis
\eqref{eq:carnot-pi1-compatible-development} holds by
construction.  Since $M$ is compact, closing paths of uniformly
bounded Riemannian length exist for every $T$, so the preceding remark
gives $Q(T)$ bounded.  Hence,
Theorem~\ref{thm:carnot-pi1-derham} applies to every
$\mathcal A$-horizontal trajectory on a compact nilmanifold, along
every scale sequence for which the rescaled horizontal primitives
converge with uniformly bounded variation.
\end{example}

\begin{corollary}[Topological realization on nilmanifolds]
\label{cor:nilmanifold-realization}
Let $\eta:[0,1]\to V_1$ be Lipschitz with $\eta(0)=0$.  Then there
exist an $\mathcal A$-horizontal trajectory $c$ on $M=G/\Gamma$ and a
sequence $T_j\to\infty$ such that, for every family of closings with
$Q(T)$ bounded,
\[
  \delta_{1/T_j}\log\rho([\gamma_{T_j}])
  \longrightarrow H[\eta].
\]
\end{corollary}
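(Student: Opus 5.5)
The plan is to combine Proposition~\ref{prop:subsequential-realization} with Example~\ref{ex:natural-class} and Theorem~\ref{thm:carnot-pi1-derham}. First I apply Proposition~\ref{prop:subsequential-realization} to \(\eta\). It gives a bounded measurable input \(K\in L^\infty([0,\infty);V_1)\) and a sequence \(T_j\to\infty\) such that \(\ell(t)=\int_0^tK(s)\,ds\) has \(\ell_{T_j}\to\eta\) uniformly and \(\sup_j\Var(\ell_{T_j};[0,1])\le\|K\|_\infty<\infty\). In particular \(\eta\) is a continuous BV path, since it is Lipschitz.

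Next I realize \(K\) as the input of an \(\mathcal A\)-horizontal trajectory on \(M=G/\Gamma\). I solve the right-invariant control equation
\[
  \widetilde c\,'(t)=K(t)\,\widetilde c(t),\qquad \widetilde c(0)=e,
\]
in \(G\). This is a linear ODE with locally integrable coefficient in any faithful matrix realization of the simply connected nilpotent group, or equivalently the path-ordered equation~\eqref{eq:path-ordered-ode}. It therefore has a unique solution that is absolutely continuous on compact intervals and defined for all \(t\ge0\); there is no blow-up because the equation is linear. I then set \(c=p\circ\widetilde c\). Since \(D=\operatorname{id}_G\) and \(p^*\mathcal A=\theta^R\), we get \(\mathcal A_{c(t)}(\dot c(t))=\widetilde c\,'(t)\widetilde c(t)^{-1}=K(t)\in V_1\) almost everywhere. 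So \(c\) is \(\mathcal A\)-horizontal with horizontal primitive exactly \(\ell\). This is the one step needing a small argument beyond citation: the trajectory in Example~\ref{ex:natural-class} was given rather than constructed from \(K\). Uniqueness for the linear equation with integrable coefficient, as used in the proof of Theorem~\ref{thm:nilpotent-pi1-derham}, handles it.

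By Example~\ref{ex:natural-class}, the compatibility identity~\eqref{eq:carnot-pi1-compatible-development} holds for this \(c\). Compactness of \(M\) supplies closings with bounded \(Q(T)\), so the admissible family of closings is nonempty. Theorem~\ref{thm:carnot-pi1-derham} then applies along \((T_j)\) with \(\ell^\infty=\eta\), and gives
\[
  \delta_{1/T_j}\log\rho([\gamma_{T_j}])\to H[\eta].
\]
Its closing-independence clause, which rests on Proposition~\ref{prop:perturbation}, makes the limit the same for every family of closings with \(Q(T)\) bounded. That is exactly the assertion of the corollary.
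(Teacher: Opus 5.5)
Your proposal is correct and follows the paper's own proof: obtain $K$ and $(T_j)$ from Proposition~\ref{prop:subsequential-realization}, solve $g'=Kg$ with $g(0)=e$, set $c=p\circ g$, and conclude via Example~\ref{ex:natural-class} and Theorem~\ref{thm:carnot-pi1-derham}. The extra points you spell out are routine checks that the paper leaves implicit: global existence for the linear equation, the identity $\mathcal A_{c(t)}(\dot c(t))=K(t)$ from $p^*\mathcal A=\theta^R$, that $\eta$ is BV, and that bounded closings exist.
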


\begin{proof}
Proposition~\ref{prop:subsequential-realization} provides a bounded
measurable $K$ and a sequence $T_j\to\infty$ for which $\eta$ is the
macroscopic horizontal path of $\ell(t)=\int_0^tK(s)\,ds$.  Let
$g:[0,\infty)\to G$ solve $g'(t)=K(t)g(t)$, $g(0)=e$, and let
$c=p\circ g$.  Then $c$ is $\mathcal A$-horizontal with horizontal
primitive $\ell$, and the conclusion follows from
Example~\ref{ex:natural-class} and
Theorem~\ref{thm:carnot-pi1-derham}.
\end{proof}

\begin{remark}[Regularity and the smooth category]
\label{rem:smooth-category}
The locally absolutely continuous class is the smallest one closed
under the constructions of this paper: the developed trajectory of a
locally integrable horizontal input is locally absolutely continuous,
and Proposition~\ref{prop:subsequential-realization} produces bounded
measurable inputs.  All piecewise $C^1$, and in particular all smooth,
trajectories are included.  Conversely, at the leading Carnot scale
nothing is lost by remaining in the smooth category: since $\ell$ is
$\lVert K\rVert_\infty$-Lipschitz, a fixed smoothing of a bounded
input changes the rescaled paths $\ell_T$ by $O(T^{-1})$ uniformly on
$[0,1]$ and generates the same macroscopic horizontal paths along
every scale sequence.  In particular, the trajectory in
Corollary~\ref{cor:nilmanifold-realization} may be taken $C^\infty$.
Subleading period-array entries, by contrast, are not generally
preserved under smoothing.
\end{remark}

\begin{example}[Horizontal flows on compact manifolds]
\label{ex:horizontal-flows}
Let $M$ be compact and connected with base point $x_0$, let
$\rho:\pi_1(M,x_0)\to G$ be a representation with flat development
form $\mathcal A$ as in Theorem~\ref{thm:nilpotent-pi1-derham}, and
let $\phi^t$ be a smooth flow on $M$ generated by a vector field $V$.
Suppose that the flow is \emph{$\mathcal A$-horizontal}:
\[
  F:=\mathcal A(V):M\longrightarrow\g
  \qquad\text{takes values in }V_1.
\]
Let $c(t)=\phi^t(x_0)$ be the orbit of the base point.  Then
\[
  K(t)=\mathcal A_{c(t)}(\dot c(t))=F(\phi^tx_0)\in V_1,
\]
so, with $\ell(t)=\int_0^tK(s)\,ds$, the compatibility hypothesis
\eqref{eq:carnot-pi1-compatible-development} holds by construction,
exactly as in Example~\ref{ex:natural-class}.  Since $M$ is
compact, $F$ is bounded; hence
$\Var(\ell_T;[0,1])\leq\lVert F\rVert_\infty$ for every $T$, and the
uniform variation hypothesis of
Theorem~\ref{thm:carnot-pi1-derham} is automatic.  Closings of
uniformly bounded Riemannian length exist by compactness and give
$Q(T)$ bounded.  So, Theorem~\ref{thm:carnot-pi1-derham}
applies to the orbit along every scale sequence for which the
rescaled horizontal primitives converge uniformly.  Flows on compact
manifolds are the setting of \cite{Schw57} and \cite{BM93}; for an
orbit through a point other than $x_0$, a fixed connecting arc
contributes a further bounded factor, which
Proposition~\ref{prop:perturbation} absorbs.

Suppose in addition that $\phi^t$ is uniquely ergodic with invariant
probability measure $m$.  Then the Birkhoff averages
$\frac1T\int_0^TF(\phi^sx)\,ds$ converge to
$\mu=\int_MF\,dm$, uniformly in $x$.
Corollary~\ref{cor:bounded-mean-straight} places every orbit in the
full-limit regime, and Corollary~\ref{cor:carnot-pi1-straight} gives
\[
  \delta_{1/T}\rho([\gamma_T])\longrightarrow\exp\mu
\]
as a full limit, independent of the orbit, of the bounded closings,
and of the scale sequence.  At the leading Carnot scale, the
asymptotic homotopy of a uniquely ergodic horizontal flow is the single group element $\exp\mu$; the finer nonabelian
information lies in the subleading defects of
Section~\ref{sec:path-defects}.

The horizontal nilflows $\phi^t(g\Gamma)=\exp(tX)g\Gamma$ with
$X\in V_1$ on a compact nilmanifold $G/\Gamma$ belong both to this
class and to Example~\ref{ex:natural-class}, with constant
input $K\equiv X$.
\end{example}

\begin{remark}[Scope of the theorem]
Theorem~\ref{thm:carnot-pi1-derham} is a leading Carnot-scale theorem.
It identifies, along every convergent scale sequence, the asymptotic-cone
endpoint of the holonomy elements associated with long paths completed by
bounded closings.  It does not assert that the subleading
period coefficients exist, nor that those which do exist are independent
of bounded closings.  The depth-two Benardete--Mitchell correction is the
first instance in which a lower asymptotic order acquires a
closing-independent homotopy interpretation.  The remaining gap between
this Carnot-scale theorem and a full nilpotent \(\pi_1\)-de Rham theorem is
summarized in Section~\ref{sec:conclusion}.
\end{remark}

\section{Period arrays, path defects, and asymptotic homotopy}
\label{sec:path-defects}

The asymptotic-development theorem identifies what survives at the leading Carnot scale along any convergent scale sequence.  The new realization theorem shows that this leading object need not be straight: every Lipschitz horizontal path based at the origin can occur along a subsequence of a bounded input.  Accordingly, the leading diagonal data may retain signed area and higher area-moment information.  Once a scale sequence has been fixed, one may ask for successive lower-order terms in the complete homogeneous components.  The full-limit regime is the rigid special case in which the macroscopic path is straight and every higher diagonal entry vanishes.

\subsection{The period array}

For a bounded horizontal input \(K\), with \(\ell(t)=\int_0^tK(s)\,ds\), the exact scaling identity \eqref{eq:exact-path-scaling-components} and the estimate
\[
  \Var(\ell_T;[0,1])\leq \|K\|_\infty
\]
imply \(H_k[T]=O(T^k)\).  This growth bound does not imply an asymptotic expansion.  When successive coefficients exist along a chosen scale sequence, they must be extracted from the complete homogeneous component, after all simplex terms and recursive interaction brackets have been combined.

\begin{definition}[Sequential period-admissibility and the period array]
\label{def:CF-power-expansion}
Let \(K:[0,\infty)\to V_1\) be bounded and locally integrable, let \(H_k[T]\in V_k\) be the depth-\(k\) component of its Lie integral, and fix a scale sequence
\[
  \mathbf T=(T_j),
  \qquad T_j\longrightarrow\infty.
\]
For a fixed layer \(k\), define its coefficients successively, whenever the indicated limits exist, by
\begin{equation}
  \Lamn{k}{k}(\mathbf T)
  =\lim_{j\to\infty}\frac{H_k[T_j]}{T_j^k},
  \label{eq:sequential-period-diagonal}
\end{equation}
and, for \(1\leq m<k\), by
\begin{equation}
  \Lamn{k}{m}(\mathbf T)
  =\lim_{j\to\infty}
  \frac{
    H_k[T_j]-\displaystyle\sum_{q=m+1}^{k}
      T_j^q\Lamn{k}{q}(\mathbf T)
  }{T_j^m}.
  \label{eq:sequential-period-recursion}
\end{equation}
We say that \(K\) is \emph{period-admissible through step \(r\) along \(\mathbf T\)} if all these limits exist for \(1\leq m\leq k\leq r\).  Equivalently, for every \(1\leq k\leq r\),
\begin{equation}
  H_k[T_j]
  =\sum_{m=1}^{k}T_j^m\Lamn{k}{m}(\mathbf T)+o(T_j)
  \qquad (j\to\infty).
  \label{eq:CF-power-expansion-general}
\end{equation}
The triangular family
\[
  \widehat{\Lambda}^{\infty}_{\mathbf T}(K)
  =\{\Lamn{k}{m}(\mathbf T):1\leq m\leq k\leq r\}
\]
is the \emph{sequential Chacon--Fomenko period array through step \(r\) along \(\mathbf T\)}.  The subscript on \(\Lambda\) records the homogeneous layer and the parenthesized superscript records the power of \(T_j\).  If the recursive limits exist as \(T\to\infty\) without restriction to a sequence, then the same array is obtained along every scale sequence, and this is the \emph{full period array}.
\end{definition}

Homogeneity allows us to rewrite \eqref{eq:CF-power-expansion-general} as
\begin{equation}
  H_k[\ell_{T_j}]
  =\Lamn{k}{k}(\mathbf T)
   +\frac{\Lamn{k}{k-1}(\mathbf T)}{T_j}+\cdots+
    \frac{\Lamn{k}{1}(\mathbf T)}{T_j^{k-1}}+o(T_j^{1-k}).
  \label{eq:normalized-period-expansion}
\end{equation}
The diagonal is the leading Carnot-normalized limit along the chosen sequence, while the lower entries are successive defects of the functional \(H_k[\ell_{T_j}]\).  They need not arise from an expansion of the paths \(\ell_{T_j}\) themselves.

\begin{proposition}[Sequential macroscopic determination of the diagonal]
\label{prop:macro-cancellation-criterion}
Let \(\mathbf T=(T_j)\) be a scale sequence such that
\[
  \ell_{T_j}\longrightarrow\ell^\infty
\]
uniformly on \([0,1]\), with
\[
  \sup_j\Var(\ell_{T_j};[0,1])<\infty.
\]
Then, for every \(1\leq k\leq n\), the diagonal limit in \eqref{eq:sequential-period-diagonal} exists and
\[
  \Lamn{k}{k}(\mathbf T)=H_k[\ell^\infty].
\]
Consequently, whenever the input is period-admissible through step \(r\) along \(\mathbf T\), the diagonal of its sequential period array is exactly the homogeneous logarithmic development of the macroscopic path arising along that sequence.
\end{proposition}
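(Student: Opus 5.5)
The plan is to read the diagonal entry as a Carnot-normalized endpoint limit and then quote Theorem~\ref{thm:macroscopic-path}. By \eqref{eq:sequential-period-diagonal}, the entry \(\Lamn{k}{k}(\mathbf T)\) is the limit of \(H_k[T_j]/T_j^k\). Proposition~\ref{prop:exact-path-scaling} with \(u=1\) rewrites this quantity as
\[
  \frac{H_k[T_j]}{T_j^k}=H_k\bigl[\ell_{T_j}\bigr],
\]
so the question becomes whether \(H_k[\ell_{T_j}]\) converges.

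The hypotheses, uniform convergence \(\ell_{T_j}\to\ell^\infty\) and \(\sup_j\Var(\ell_{T_j};[0,1])<\infty\), say exactly that \(\ell^\infty\) is a macroscopic horizontal path along \((T_j)\) in the sense of Definition~\ref{def:macroscopic-horizontal-path}. The definition requires \(\ell^\infty\) to be a continuous BV path. Continuity follows from uniform convergence of continuous paths. The BV property follows from Proposition~\ref{prop:pathwise-CF-continuity}, or directly by lower semicontinuity of variation under pointwise convergence.

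With that in place, the componentwise statement \eqref{eq:uniform-homogeneous-path-limit} of Theorem~\ref{thm:macroscopic-path} at \(u=1\) gives
\[
  \frac{H_k[T_j]}{T_j^k}\longrightarrow H_k[\ell^\infty]
\]
for every \(1\le k\le n\). So the limit defining \(\Lamn{k}{k}(\mathbf T)\) exists and equals \(H_k[\ell^\infty]\).

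For the consequence, period-admissibility through step \(r\) only adds the existence of the subdiagonal limits \eqref{eq:sequential-period-recursion}. The diagonal entries are defined by the same limit as above, so they are still \(H_k[\ell^\infty]\) for \(k\le r\). This is the homogeneous logarithmic development of the macroscopic path along \(\mathbf T\).

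There is no real obstacle; the analytic content is already in Proposition~\ref{prop:pathwise-CF-continuity}. The only points to check are that the definitions match exactly and that the base point is normalized: \(\ell_T(0)=0\), and \(H[T]\) depends only on increments of \(\ell\), so subtracting \(\ell(0)\) is harmless.
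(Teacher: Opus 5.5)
Your proposal is correct and matches the paper's proof: the paper rewrites \(H_k[T_j]/T_j^k\) as \(H_k[\ell_{T_j}]\) by exact homogeneity, then applies Theorem~\ref{thm:macroscopic-path} with \(u=1\). Your extra check that \(\ell^\infty\) is a continuous BV path, so that it qualifies as a macroscopic horizontal path, is a step the paper leaves implicit; it is supplied by Lemma~\ref{lem:limit-BV}.
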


\begin{proof}
By exact homogeneity,
\[
  \frac{H_k[T_j]}{T_j^k}=H_k[\ell_{T_j}].
\]
Theorem~\ref{thm:macroscopic-path} gives
\[
  H_k[\ell_{T_j}]\longrightarrow H_k[\ell^\infty],
\]
which proves both the existence and the stated value of the diagonal limit.
\end{proof}

\begin{corollary}[Full macroscopic-limit rigidity of the diagonal]
\label{cor:full-limit-period-diagonal}
Suppose that the full rescaled family \(\ell_T\) converges uniformly as \(T\to\infty\), with uniformly bounded variation.  Then there is \(\mu\in V_1\) such that
\[
  \ell^\infty(u)=u\mu,
\]
and, for every scale sequence \(\mathbf T=(T_j)\),
\[
  \Lamn{1}{1}(\mathbf T)=\mu,
  \qquad
  \Lamn{k}{k}(\mathbf T)=0,
  \quad 2\leq k\leq n.
\]
In particular, this conclusion holds for every bounded input with an ordinary mean \(\mu\).  Thus, whenever a sequential period array exists in the full-limit regime, its higher diagonal is trivial and independent of the chosen scale sequence.
\end{corollary}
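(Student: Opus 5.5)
The plan is to combine three earlier results: full-limit rigidity (Proposition~\ref{prop:full-macroscopic-rigidity}), the sequential diagonal identification (Proposition~\ref{prop:macro-cancellation-criterion}), and straight-line vanishing (Corollary~\ref{cor:straight-line}).

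First I will apply Proposition~\ref{prop:full-macroscopic-rigidity} to the uniform limit \(\ell^\infty\) of the full family \(\ell_T\). This gives \(\ell^\infty(u)=u\mu\) with \(\mu=\ell^\infty(1)\in V_1\).

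Next, fix an arbitrary scale sequence \(\mathbf T=(T_j)\) with \(T_j\to\infty\). Since the full family converges uniformly with uniformly bounded variation, the restricted family \(\ell_{T_j}\) also converges uniformly to \(\ell^\infty\). Its variations are still bounded by the same constant. So the hypotheses of Proposition~\ref{prop:macro-cancellation-criterion} hold along \(\mathbf T\), and every diagonal limit \(\Lamn{k}{k}(\mathbf T)\) exists and equals \(H_k[\ell^\infty]\). Corollary~\ref{cor:straight-line} then evaluates these: \(H_1[\ell^\infty]=\mu\), and \(H_k[\ell^\infty]=0\) for \(k\geq2\). Because \(\mu\) is determined by the full family, it does not depend on \(\mathbf T\). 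This gives the claimed independence of the diagonal from the scale sequence.

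For the bounded-input clause, I will invoke Corollary~\ref{cor:bounded-mean-straight}. It shows that an ordinary mean \(\mu\) forces \(\ell_T\to(u\mapsto u\mu)\) uniformly as a full limit, with \(\Var(\ell_T;[0,1])\leq\|K\|_\infty\). So the input falls under the first part of the statement.

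The final sentence needs no separate argument. If a sequential period array exists in this regime, its diagonal entries are by definition the limits in \eqref{eq:sequential-period-diagonal}, and these have just been computed.

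There is no real obstacle. The only point needing care is the passage from full-family convergence to convergence along an arbitrary sequence \(T_j\to\infty\). This is immediate from the definition of the uniform limit as \(T\to\infty\), since \(\sup_{u}\|\ell_T(u)-\ell^\infty(u)\|\to0\) as \(T\to\infty\) implies the same along any sequence \(T_j\to\infty\).
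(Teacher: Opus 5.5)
Your proposal is correct and follows the paper's own proof. The paper likewise obtains straightness from Proposition~\ref{prop:full-macroscopic-rigidity} and evaluates $H_k[\ell^\infty]$ by Corollary~\ref{cor:straight-line}. It then identifies these values with the diagonal limits along every sequence via Proposition~\ref{prop:macro-cancellation-criterion}, and handles the ordinary-mean clause with Corollary~\ref{cor:bounded-mean-straight}.
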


\begin{proof}
Proposition~\ref{prop:full-macroscopic-rigidity} makes the full macroscopic limit straight.  Corollary~\ref{cor:straight-line} gives
\[
  H_1[\ell^\infty]=\mu,
  \qquad
  H_k[\ell^\infty]=0\quad(k\geq2),
\]
and Proposition~\ref{prop:macro-cancellation-criterion} identifies these values with the diagonal limits along every sequence.  The ordinary-mean assertion follows from Corollary~\ref{cor:bounded-mean-straight}.
\end{proof}

The coefficients in each row are determined successively along the chosen sequence.  After \(\Lamn{k}{k}(\mathbf T)\) has been found, the next coefficient is the limit of the normalized remainder, and the procedure continues down to the linear scale.  This formulation permits partial rows: one may know the diagonal and the next term without knowing whether the following normalized remainder converges.  Different scale sequences may yield different arrays, beginning with different diagonals when their macroscopic paths differ.  Fractional powers, logarithmic terms, or persistent oscillations may prevent period-admissibility along a given sequence even though its leading Carnot limit exists.

Suppressing \(\mathbf T\) from the notation when the sequence is fixed, the array through step three has the form
\begin{align}
  H_1[T_j]&=T_j\Lamn{1}{1}+o(T_j),\notag\\
  H_2[T_j]&=T_j^2\Lamn{2}{2}+T_j\Lamn{2}{1}+o(T_j),\notag\\
  H_3[T_j]&=T_j^3\Lamn{3}{3}+T_j^2\Lamn{3}{2}+T_j\Lamn{3}{1}+o(T_j).
  \label{eq:three-step-period-array}
\end{align}
For a nonstraight subsequential macroscopic path, the higher diagonal terms may be nonzero and are determined by its signed-area and area-moment development.  In the full-limit regime, and in particular for a bounded input with an ordinary mean, the higher diagonal terms vanish.  The first nonabelian subleading data then occur in \(\Lamn{2}{1}\), \(\Lamn{3}{2}\), and \(\Lamn{3}{1}\).  Their existence is a dynamical and analytic question distinct from the finite nilpotent algebra that defines them.

\subsection{The Benardete--Mitchell correction in depth two}

\label{sec:BM-correction}

In depth two, a bounded closing can change the central coefficient at
order \(t\), but the BCH law produces only one central interaction to be
corrected.  Benardete and Mitchell \cite{BM93} use this correction to construct a
closing-independent asymptotic homotopy invariant for 2-step nilpotent Lie groups.  In depth three and
higher, additional brackets involving bounded endpoint defects occur at
several subleading orders, so the same argument does not extend without
further corrections.

\subsubsection*{The depth-two pullback group}
Following \cite{BM93}, let \(N\) be a torsion-free 2-step nilpotent quotient of
\(\pi_1(M,x_0)\), and let \(L(N)\) be the Lie algebra of its real
Mal'cev completion.  The Benardete--Mitchell pullback group
\(\widehat N\) is the vector space \(L(N)\) equipped with the truncated
BCH product
\[
  g\times h=g+h+\frac12[g,h].
\]
The exponential map is the identity in these coordinates, so an element
of \(\widehat N\) and its logarithm are represented by the same vector.
For an open trajectory, we may regard the Lie integral
\[
  H[t]=H_1[t]+H_2[t]
\]
simultaneously as the logarithm of its development and as the
corresponding element of \(\widehat N\).  Assume that the abelian
asymptotic limit
\[
  \mu=\lim_{t\to\infty}\frac{H_1[t]}{t}
\]
exists.  Benardete and Mitchell define their asymptotic limit by the
corrected expression
\begin{equation}
  \operatorname{Alim}(H[t])
  =\lim_{t\to\infty}
    \left(\frac{H[t]}{t}+\frac12[H[t],\mu]\right),
  \label{eq:BM-Alim}
\end{equation}
whenever this limit exists~\cite[Lemma~4.2]{BM93}.  Since \(V_2\) is
central, the correction depends only on \(H_1[t]\), and the formula
separates into its horizontal and central parts as follows.

\begin{proposition}[Reconstruction of the Benardete--Mitchell Alim invariant]
\label{prop:bm-recovery}
Let
\[
  H[t]=H_1[t]+H_2[t]
\]
be the Lie integral, equivalently the logarithm of the development,
in a 2-step nilpotent Lie algebra, and let
\(\mu=\lim_{t\to\infty}H_1[t]/t\) be its abelian asymptotic limit.  Then
the Benardete--Mitchell Alim invariant is reconstructed by
\begin{equation}
  \operatorname{Alim}(H[t])
  =\mu+
   \lim_{t\to\infty}
   \left(\frac{H_2[t]}{t}+\frac12[H_1[t]-t\mu,\mu]\right),
  \label{eq:BM-from-CF}
\end{equation}
whenever the displayed limit exists.

In the Heisenberg notation of Section~\ref{sec:Heisenberg}, with
\(H_2[t]=\frac12A(t)Z\),
\(F(t)=\bar f t+\widetilde F(t)\), and
\(G(t)=\bar g t+\widetilde G(t)\), the central coefficient is
\begin{equation}
  \Lambda^{(1)}_{2,\mathrm{BM}}\big|_Z
  =\frac12\lim_{t\to\infty}\frac{A_{\mathrm{BM}}(t)}{t},
  \qquad
  A_{\mathrm{BM}}(t)
  =A(t)+\bar g\,t\widetilde F(t)-\bar f\,t\widetilde G(t).
  \label{eq:BM-corrected-area-CF}
\end{equation}
\end{proposition}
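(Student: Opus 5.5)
The plan is to expand the Benardete--Mitchell expression \eqref{eq:BM-Alim} layer by layer, using only that \(V_2\) is central and the bracket is bilinear. Write \(H[t]=H_1[t]+H_2[t]\) and note that \(\mu\in V_1\). Since \([V_2,\cdot\,]=0\) in a 2-step algebra, \([H[t],\mu]=[H_1[t],\mu]\). Writing \(H_1[t]=t\mu+(H_1[t]-t\mu)\) and using \([\mu,\mu]=0\) gives \([H_1[t],\mu]=[H_1[t]-t\mu,\mu]\in V_2\). The expression inside the limit therefore splits as
\[
  \frac{H_1[t]}{t}
  +\left(\frac{H_2[t]}{t}+\frac12[H_1[t]-t\mu,\mu]\right),
\]
with the first summand in \(V_1\) and the second in \(V_2\).

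Since \(\g=V_1\oplus V_2\) is a direct sum of finite-dimensional spaces, the full limit exists if and only if both graded parts converge. The \(V_1\) part converges to \(\mu\) by the definition of the abelian asymptotic limit, so the existence of the displayed limit in \eqref{eq:BM-from-CF} is equivalent to the existence of \(\operatorname{Alim}(H[t])\). In that case the two values agree, which proves \eqref{eq:BM-from-CF}.

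For the Heisenberg formula, I would specialize to \(\mu=\bar fX+\bar gY\) and \(H_1[t]-t\mu=\widetilde F(t)X+\widetilde G(t)Y\). Using \([X,Y]=Z\),
\[
  [\widetilde FX+\widetilde GY,\ \bar fX+\bar gY]=(\bar g\widetilde F-\bar f\widetilde G)Z.
\]
With \(H_2[t]=\tfrac12A(t)Z\), the central part becomes
\[
  \frac1{2t}\bigl(A(t)+\bar g\,t\widetilde F(t)-\bar f\,t\widetilde G(t)\bigr)Z
  =\frac{A_{\mathrm{BM}}(t)}{2t}Z,
\]
which is \eqref{eq:BM-corrected-area-CF}.

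The only delicate point is bookkeeping rather than analysis. One must check that the BM convention \(g\times h=g+h+\frac12[g,h]\) and the Chacon--Fomenko chronological ordering yield the same \(H_2\). Since both are just the logarithm of the same development written in exponential coordinates, I would treat \(H[t]\) as the common vector in \(L(N)\), as the statement does, and no further estimate is needed.
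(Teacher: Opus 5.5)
Your proposal is correct and follows the same route as the paper: centrality of \(V_2\) removes \([H_2[t],\mu]\), and the identity \([H_1[t],\mu]=[H_1[t]-t\mu,\mu]\) together with the graded splitting gives \eqref{eq:BM-from-CF}, while the Heisenberg bracket \([\widetilde FX+\widetilde GY,\bar fX+\bar gY]=(\bar g\widetilde F-\bar f\widetilde G)Z\) gives \eqref{eq:BM-corrected-area-CF}; you simply make explicit the steps (\([\mu,\mu]=0\) and the componentwise existence of the limit) that the paper leaves implicit. One small caution: your justification that the two conventions agree because they are ``the logarithm of the same development'' is not accurate, since Benardete--Mitchell and Chacon--Fomenko append later factors on opposite sides and so the bracket order flips, as the paper notes after the proof; this is harmless, because the proposition applies \eqref{eq:BM-Alim} directly to the Chacon--Fomenko \(H[t]\), which is what you end up doing.
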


\begin{proof}
Equation~\eqref{eq:BM-Alim} and the centrality of \(V_2\) give
\([H_2[t],\mu]=0\), and hence \eqref{eq:BM-from-CF}.  In the Heisenberg
basis,
\[
  [H_1[t],\mu]
  =\bigl(\bar g\widetilde F(t)-\bar f\widetilde G(t)\bigr)Z,
\]
which gives \eqref{eq:BM-corrected-area-CF}.  The plus sign is the sign in
the Benardete--Mitchell formula; the comparison with probabilistic L\'evy area is a
separate orientation convention.
\end{proof}

We distinguish the corrected coefficient \(\Lambda_{2,\mathrm{BM}}^{(1)}\)
from the raw period-array entry \(\Lambda_2^{(1)}\): the former includes
the nonlinear correction determined by the horizontal mean.

The coefficient \(H_2[t]/t\) need not be independent of the bounded
closing.  If a closing contributes a bounded horizontal element
\(c_t\in V_1\), its BCH product with the accumulated horizontal term
\(H_1[t]\sim t\mu\) produces a central contribution proportional to
\([c_t,H_1[t]]\), which can survive after division by \(t\).  The
correction in \eqref{eq:BM-from-CF} compensates for precisely this
interaction.  Benardete and Mitchell prove that \(\operatorname{Alim}\)
is unchanged by bounded right perturbations in the 2-step pullback group
\cite[Proposition~4.3]{BM93}, and they apply this fact to the homotopy
images of loops obtained by adding bounded closing paths
\cite[Proposition~5.2]{BM93}.  Thus, the corrected limit is
independent of the chosen bounded closing and defines their asymptotic
homotopy invariant.  In the Chacon--Fomenko chronological convention, an appended
closing factor multiplies on the left rather than on the right; this
changes the corresponding bracket order, but not the underlying
bounded-closing mechanism after the conventions are translated.

\subsubsection*{The higher-step obstruction}
The depth-two argument is special because every BCH correction lands in
the centre and no nested bracket survives.  In a 3-step Lie algebra, a
bounded correction in a lower layer can bracket with an accumulated term
and contribute at a subleading order in a higher layer.  For example,
\[
  c_t\in V_1,\qquad H_2[t]=O(t)
\]
can produce \([c_t,H_2[t]]\in V_3\) of order \(t\).  Likewise, a bounded
\(V_2\)-correction can bracket with the linear horizontal drift
\(H_1[t]\sim t\mu\) and again produce an order-\(t\) contribution in
\(V_3\).  These terms reflect the nested closing action rather than a
coordinate error.

These are the first nested closing terms responsible for the failure of
the unmodified Benardete--Mitchell bounded-perturbation argument beyond depth two.  The
Chacon--Fomenko expansion identifies the affected subleading coefficients, but it
does not make them closing-independent; absent further corrections or a
suitable quotient, they remain data of the chosen finite-time
development.

\subsubsection*{The two normalizations}
By Proposition~\ref{prop:perturbation}, these bounded-closing effects
disappear at the leading Carnot scale.  The distinction between the
Carnot and Benardete--Mitchell normalizations is substantive.  The
Carnot dilation \(\delta_{1/t}\) scales the depth-\(k\) stratum by
\(t^{-k}\) and reads the leading coefficient \(\Lamn{k}{k}\); it records
what the orbit looks like under graded rescaling.  The Benardete--Mitchell normalization
uses a uniform factor \(1/t\) in every depth because it compares the
orbit with a one-parameter subgroup \(t\mapsto a^t\) generated by an
element \(a\in\widehat N\).  In the pullback group \(\widehat N\), the
identity \(a^t=ta\) in these logarithmic coordinates makes this linear
normalization natural.  Thus, the two normalizations extract
different asymptotic orders from the same layers.

The Carnot diagonal can be too coarse: the higher diagonal
terms may vanish while nonabelian information survives at
subleading orders.  At the first layer, Schwartzman's asymptotic cycle
records \(\Lamn{1}{1}\); in depth two, the Benardete--Mitchell construction
supplements it with the Alim-corrected central coefficient.  In depth three
and higher, the lower asymptotic orders are natural data of the chosen
path-holonomy input, but they are not generally homotopy invariants and
need not be invariant under topological conjugacy without compatible
development data.  This distinction is one of the main points of the
present paper.

\subsection{The free 3-step obstruction}
\label{sec:free-three-step}

The Engel Lie group shows that the third homogeneous component is a corrected first moment of area, but its third layer is one-dimensional.  It cannot separate a repeated-generator area moment from a contribution involving three distinct horizontal generators.  The free 3-step Lie algebra on three generators is the first model in which these mechanisms occupy independent directions.

Let \(\mathfrak f_{3,3}\) be generated by \(X_1,X_2,X_3\).  Put
\[
  Y_1=[X_1,X_2],\qquad
  Y_2=[X_1,X_3],\qquad
  Y_3=[X_2,X_3].
\]
A convenient basis of the third layer is
\begin{align*}
  Z_1&=[Y_1,X_1], & Z_2&=[Y_1,X_2], & Z_3&=[Y_1,X_3],\\
  Z_4&=[Y_2,X_1], & Z_5&=[Y_2,X_2], & Z_6&=[Y_2,X_3],\\
  Z_7&=[Y_3,X_2], & Z_8&=[Y_3,X_3],
\end{align*}
with \([Y_3,X_1]=Z_5-Z_3\) by the Jacobi identity.  For
\[
  K(t)=f(t)X_1+g(t)X_2+h(t)X_3,
\]
write \(F_i(t)\) for the three horizontal primitives and \(A_i(t)\) for the signed areas in the coordinate planes.  Then
\[
  H_2[t]=\frac12A_1(t)Y_1+\frac12A_2(t)Y_2+\frac12A_3(t)Y_3,
\]
and the third-layer recursion is
\begin{equation}
  3H_3[t]=T_2[t]+\frac12[H_1[t],H_2[t]].
  \label{eq:F33-depth-three-recursion}
\end{equation}
For each basis vector \(Z_k\in V_3\), write \(H_3^{Z_k}[t]\) for the coefficient of \(Z_k\) in \(H_3[t]\), and write \(B_{Z_k}(t)\) for the coefficient of \(Z_k\) in the triple-simplex term \(T_2[t]\).  Define
\[
  Q_{Z_k}(F_1,F_2,F_3,A_1,A_2,A_3)
\]
to be the coefficient of \(Z_k\) in the \(V_3\)-valued bracket
\[
  [F_1X_1+F_2X_2+F_3X_3,\,
    A_1Y_1+A_2Y_2+A_3Y_3].
\]
Taking the coefficient of \(Z_k\) in \eqref{eq:F33-depth-three-recursion} gives
\begin{equation}
  H_3^{Z_k}[t]
  =\frac13B_{Z_k}(t)
   +\frac1{12}Q_{Z_k}(F_1,F_2,F_3,A_1,A_2,A_3).
  \label{eq:F33-component-form}
\end{equation}

Six third-layer directions arise by allowing one generator of a coordinate pair to act on that pair's area bracket.  They are first moments of the three planar signed areas and span
\[
  V_3^{\mathrm{pair}}
  =\operatorname{span}\{Z_1,Z_2,Z_4,Z_6,Z_7,Z_8\}.
\]
The remaining two directions
\[
  V_3^{\mathrm{cross}}=\operatorname{span}\{Z_3,Z_5\}
\]
have multidegree \((1,1,1)\).  They involve all three horizontal generators and arise from the complete boundary recursion rather than from the twisted curvature of a single coordinate-plane filling.  Thus,
\[
  V_3=V_3^{\mathrm{pair}}\oplus V_3^{\mathrm{cross}}
\]
relative to the chosen ordered horizontal frame.

This decomposition gives a concrete reason that the depth-two closing correction cannot simply be repeated.  A bounded horizontal closing can interact with an order-\(T\) area term, and a bounded second-layer closing can interact with the order-\(T\) horizontal drift.  Both effects contribute at order \(T\) in the third layer, while the all-distinct simplex terms may contribute at the same scale.  A higher-step homotopy invariant must  account simultaneously for same-pair area moments, cross-pair chronological correlations, and nested BCH closing terms.  The present paper identifies these sources but does not construct a universal higher-step correction.

\section{Conclusion}
\label{sec:conclusion}

The endpoint of a path does not capture all of its asymptotic geometry.  A long horizontal path is first rescaled on the unit interval, and its nilpotent development is then compared with the Carnot-rescaled development of the original long segment.  These operations agree exactly at every finite scale.  Under uniform convergence and uniform variation control, the entire developed paths converge.  The limiting endpoint in the asymptotic cone is the endpoint of the development of the limiting macroscopic path.

This formulation separates several structures that are easily merged prematurely.  The horizontal macroscopic path records the shape that survives the blow-down.  Its group-valued development records the chronological noncommutativity of that shape.  The endpoint of the developed path is the corresponding point in the asymptotic cone.  For bounded horizontal inputs, the possible subsequential macroscopic paths are exactly the Lipschitz paths based at the origin, so their higher homogeneous components may retain signed area and higher area-moment information without any further shape restriction.  When the full rescaled family converges, scaling rigidity forces the limiting path to be straight, and the higher leading components vanish.

The passage to asymptotic homotopy occurs only after a long open trajectory has been related to flat nilpotent holonomy and completed by a controlled closing.  The nilpotent \(\pi_1\)-de Rham correspondence identifies the finite-time logarithmic endpoint of a based loop.  Bounded closings vanish at Carnot scale, so the asymptotic-development theorem yields a closing-independent sequential asymptotic-cone class whenever the horizontal and flat developments are compatible.  This is the leading nilpotent asymptotic homotopy statement established here.

Below the leading scale, the problem changes.  Along a chosen scale sequence, the period array begins with the Carnot-scale development of its macroscopic path and then records successive defects in the convergence of the complete homogeneous components.  The lower entries are not determined by the macroscopic path alone, and their existence is not a formal consequence of nilpotent termination.  They may depend on the chosen sequence and may be affected by oscillatory endpoint terms and by bounded closings.  In the full-limit regime, the higher diagonal is trivial, and these subleading defects are the first nonabelian terms in the array.  The Benardete--Mitchell invariant shows that the first depth-two defect admits a nonlinear correction that removes the closing dependence.  The free 3-step calculation displays the additional same-pair, cross-pair, and nested-closing interactions that a higher-step correction must address.

The results distinguish three levels of asymptotic path information.  The first is macroscopic horizontal shape.  The second is the Carnot-scale nilpotent development of that shape.  The third consists of subleading defects that measure how the original family approaches its macroscopic limit.  Only after suitable closing corrections have been constructed do quantities at the third level become candidates for homotopy invariants.

Several problems remain open within this framework.  The asymptotic-development theorem is formulated in a graded target, whereas a general nilpotent Mal'cev completion is filtered and has the associated graded group as its asymptotic cone.  A filtered version should relate the finite-time holonomy to its graded blow-down without making lower-order statements depend on a chosen splitting.  The finite-time topological development also uses a full flat \(\g\)-valued form, while the asymptotic-development theorem is formulated for a horizontal BV input.  A more complete theory should identify geometric hypotheses under which the full development admits the required horizontal realization, or should extend the asymptotic-development theorem to filtered nonhorizontal developments.  Finally, a full higher-step nilpotent \(\pi_1\)-de Rham theorem would require a systematic account of the subleading closing corrections indicated by the free 3-step model.

The path perspective fixes the order in which these questions should be approached.  The asymptotic object is first obtained by developing the macroscopic path.  Topological and dynamical refinements are then asked of that object and of the defects by which the finite-scale developments converge to it.

Conceptually, the viewpoint developed here suggests an interpretation of the Carnot-diagonal data that extends beyond the specific constructions of this paper. The asymptotic cone provides the canonical first-order approximation to the large-scale geometry, while the Carnot-diagonal data and the associated period arrays seek to detect higher-order asymptotic information associated with individual trajectories within that limiting geometry. From this perspective, it is natural to regard the existence of stable nonabelian asymptotic invariants as a finer dynamical phenomenon than the existence of the asymptotic cone itself. The examples considered in this paper support this interpretation, but the present work does not establish it as a general principle. Rather, they suggest that the stabilization of higher-order asymptotic data is itself a mathematical phenomenon worthy of investigation.

This viewpoint also helps explain the distinguished role of the Carnot grading. The graded geometry determines the canonical polynomial asymptotic hierarchy arising from the intrinsic dilation structure of the asymptotic cone, providing the natural setting in which to seek higher-order asymptotic invariants. Other asymptotic models, such as logarithmic or fractional scalings, may well prove fruitful for particular classes of dynamical systems, but they are not dictated by the graded geometry itself. Whether such asymptotic models admit comparably canonical invariants, or whether the Carnot grading marks the natural boundary of a general theory, remains an open question. 

\appendix
\section{Continuity of ordered BV integrals}
\label{app:continuity}

This appendix proves the continuity statement used in Proposition~\ref{prop:pathwise-CF-continuity}.  The estimates are uniform over all subintervals, which allows one to pass from endpoint convergence to convergence of the complete developed paths.

The analytic result needed for the asymptotic-development theorem is the following
continuity statement:
on a family of continuous paths with uniformly bounded variation, each
ordered iterated integral depends continuously on the path in the uniform
norm.

Recall that for a path \(\eta:[a,b]\to\g\),
\begin{equation}
  \Var(\eta;[a,b])
  =
  \sup_{a=t_0<\cdots<t_N=b}
  \sum_{i=1}^N\|\eta(t_i)-\eta(t_{i-1})\|.
  \label{eq:total-variation-recall}
\end{equation}

\begin{lemma}[Uniform limit of uniformly BV paths]
\label{lem:limit-BV}
Let \(\gamma_m:[0,1]\to \g\) be continuous BV paths such that
\[
    \gamma_m\longrightarrow\gamma
\]
uniformly and
\[
    \sup_m\Var(\gamma_m;[0,1])\leq M.
\]
Then \(\gamma\) is a continuous BV path and
\[
    \Var(\gamma;[0,1])\leq M.
\]
\end{lemma}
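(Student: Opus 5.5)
The plan is the standard lower-semicontinuity argument for total variation under pointwise convergence. Continuity of \(\gamma\) is immediate, since a uniform limit of continuous paths is continuous. So the whole content is the variation bound, and I will prove it by testing \(\gamma\) against one partition at a time.

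Fix an arbitrary partition \(0=t_0<t_1<\cdots<t_N=1\) of \([0,1]\). For each \(m\), Definition~\ref{def:BV-path} together with the hypothesis gives
\[
  \sum_{i=1}^N\|\gamma_m(t_i)-\gamma_m(t_{i-1})\|
  \leq \Var(\gamma_m;[0,1])\leq M .
\]
The left side is a finite sum involving only the \(N+1\) values \(\gamma_m(t_i)\). By pointwise convergence, which is implied by uniform convergence, and by continuity of the norm, each summand converges to \(\|\gamma(t_i)-\gamma(t_{i-1})\|\). Letting \(m\to\infty\) therefore yields
\[
  \sum_{i=1}^N\|\gamma(t_i)-\gamma(t_{i-1})\|\leq M .
\]

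Taking the supremum over all partitions in \eqref{eq:total-variation-recall} then gives \(\Var(\gamma;[0,1])\leq M<\infty\). Hence \(\gamma\) is a continuous BV path with the stated bound.

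I do not expect a genuine obstacle here. The only point to keep straight is the order of operations: the limit in \(m\) is taken for a fixed finite partition, and the supremum over partitions comes only afterwards. This is exactly why the bound passes to the limit as an inequality (lower semicontinuity) rather than as an equality. No equality of variations should be claimed; indeed a fine zigzag can have variation strictly larger than its uniform limit. Uniformity of the convergence is not even needed for this step; it will matter only in the subsequent continuity estimates for the ordered integrals.
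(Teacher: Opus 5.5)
Your proposal is correct and is the same argument as the paper's: fix a finite partition, pass to the limit in the finite sum using pointwise convergence, then take the supremum over partitions. You also state explicitly that continuity of \(\gamma\) follows from uniform convergence, which the paper leaves implicit; otherwise the two proofs coincide.
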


\begin{proof}
For any partition \(0=t_0<\cdots<t_N=1\),
\[
    \sum_{j=1}^N
    \|\gamma(t_j)-\gamma(t_{j-1})\|
    =
    \lim_{m\to\infty}
    \sum_{j=1}^N
    \|\gamma_m(t_j)-\gamma_m(t_{j-1})\|
    \leq M.
\]
Taking the supremum over partitions proves the assertion.
\end{proof}

The recursive definition of the ordered tensor integrals gives the two
variation estimates
\begin{equation}
  \|I_q(\eta)_{s,t}\|
  \leq
  \frac{\Var(\eta;[s,t])^q}{q!},
  \qquad
  \Var_u\bigl(I_q(\eta)_{s,u};[s,t]\bigr)
  \leq
  \frac{\Var(\eta;[s,t])^q}{q!}.
  \label{eq:ordered-tensor-variation-estimates}
\end{equation}
For completeness, these follow by induction on \(q\).  Put
\(v(u)=\Var(\eta;[s,u])\).  The case \(q=1\) follows directly from
\(I_1(\eta)_{s,u}=\eta(u)-\eta(s)\).  If the estimates hold at level
\(q-1\), then \eqref{eq:ordered-tensor-recursion} and the
Riemann--Stieltjes variation estimate give
\[
  \|I_q(\eta)_{s,t}\|,
  \ \Var_u\bigl(I_q(\eta)_{s,u};[s,t]\bigr)
  \leq
  \int_s^t\frac{v(u)^{q-1}}{(q-1)!}\,dv(u)
  =
  \frac{v(t)^q}{q!}.
\]

\begin{lemma}[Continuity of the ordered integrals]
\label{lem:iterated-continuity}
Under the hypotheses of Lemma~\ref{lem:limit-BV}, for every fixed
\(q\geq 1\),
\[
    \sup_{0\leq s\leq t\leq 1}
    \bigl\|
       I_q(\gamma_m)_{s,t}-I_q(\gamma)_{s,t}
    \bigr\|
    \longrightarrow 0.
\]
\end{lemma}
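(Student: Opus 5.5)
Induction on \(q\), uniformly in \(0\leq s\leq t\leq1\). The case \(q=1\) is immediate: \(I_1(\gamma_m)_{s,t}-I_1(\gamma)_{s,t}\) equals \((\gamma_m-\gamma)(t)-(\gamma_m-\gamma)(s)\), which is bounded by \(2\|\gamma_m-\gamma\|_\infty\). For the inductive step, assume the statement at level \(q-1\). Using the recursion \eqref{eq:ordered-tensor-recursion}, split
\[
  I_q(\gamma_m)_{s,t}-I_q(\gamma)_{s,t}
  =\int_s^t d\gamma_m(u)\otimes\bigl(I_{q-1}(\gamma_m)_{s,u}-I_{q-1}(\gamma)_{s,u}\bigr)
  +\int_s^t d(\gamma_m-\gamma)(u)\otimes I_{q-1}(\gamma)_{s,u}.
\]
The first term is at most \(M\sup_{s\leq u\leq t}\|I_{q-1}(\gamma_m)_{s,u}-I_{q-1}(\gamma)_{s,u}\|\) by the standard Riemann--Stieltjes estimate and the projective-norm bound \(\|a\otimes b\|\leq\|a\|\|b\|\). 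By the inductive hypothesis this tends to zero uniformly in \(s\leq t\).

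The second term is the main obstacle. Here the integrator \(\gamma_m-\gamma\) is only small in the uniform norm, not in variation; its variation is merely bounded by \(2M\), using Lemma~\ref{lem:limit-BV}. I would integrate by parts, which is legitimate because both \(\gamma_m-\gamma\) and \(u\mapsto I_{q-1}(\gamma)_{s,u}\) are continuous and BV. Writing \(\delta_m=\gamma_m-\gamma\) and \(J(u)=I_{q-1}(\gamma)_{s,u}\), with \(J(s)=0\) for \(q\geq2\), this gives
\[
  \int_s^t d\delta_m(u)\otimes J(u)
  =\delta_m(t)\otimes J(t)-\delta_m(s)\otimes J(s)-\int_s^t\delta_m(u)\otimes dJ(u),
\]
where the order of the tensor factors is kept fixed. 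The boundary terms are bounded by \(2\|\delta_m\|_\infty\,\|J\|_\infty\). The remaining integral is bounded by \(\|\delta_m\|_\infty\Var(J;[s,t])\). Both \(\|J\|_\infty\) and \(\Var(J)\) are at most \(M^{q-1}/(q-1)!\) by \eqref{eq:ordered-tensor-variation-estimates}, applied to \(\gamma\) with \(\Var(\gamma)\leq M\). So the second term is \(O(\|\gamma_m-\gamma\|_\infty)\), uniformly in \(s\leq t\).

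The one point to check is the integration-by-parts formula for vector-valued Riemann--Stieltjes integrals against the bilinear map \(\otimes\). I would verify it by the usual summation by parts on tagged partitions. One then passes to the limit using continuity of both functions and existence of both integrals, which holds since each function is continuous and the other is BV. Combining the two terms closes the induction.

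If avoiding integration by parts is preferred, an alternative for the second term is an \(\varepsilon\)-argument. Approximate \(J\) uniformly by a step function with finitely many jumps on a fixed partition; this is possible by uniform continuity of \((s,u)\mapsto I_{q-1}(\gamma)_{s,u}$. The integral against \(d\delta_m\) of the step approximation is then a finite sum of increments of \(\delta_m\), each at most \(2\|\delta_m\|_\infty\). The approximation error is at most \(\varepsilon\cdot 2M\). I would use integration by parts as the primary route.
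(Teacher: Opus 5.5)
Your proposal is correct and matches the paper's proof essentially step for step. It uses the same induction, the same splitting via the recursion, the bound $M\sup_{s\le u\le t}\|I_{q-1}(\gamma_m)_{s,u}-I_{q-1}(\gamma)_{s,u}\|$ on the first term, and Riemann--Stieltjes integration by parts (using $I_{q-1}(\gamma)_{s,s}=0$ and the variation estimates) to make the second term $O(\|\gamma_m-\gamma\|_\infty)$ uniformly in $s\le t$.
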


\begin{proof}
We use induction on \(q\).

For \(q=1\),
\[
    I_1(\gamma_m)_{s,t}-I_1(\gamma)_{s,t}
    =
    (\gamma_m-\gamma)(t)-(\gamma_m-\gamma)(s),
\]
and hence
\[
    \sup_{s\leq t}
    \|I_1(\gamma_m)_{s,t}-I_1(\gamma)_{s,t}\|
    \leq
    2\|\gamma_m-\gamma\|_\infty
    \longrightarrow0.
\]

Assume the result holds at level \(q-1\), and put
\(Z_m=\gamma_m-\gamma\).  The recursive identity
\eqref{eq:ordered-tensor-recursion} gives
\begin{align}
    I_q(\gamma_m)_{s,t}-I_q(\gamma)_{s,t}
    ={}&
    \int_s^t
       d\gamma_m(u)\otimes
       \bigl(
          I_{q-1}(\gamma_m)_{s,u}
          -
          I_{q-1}(\gamma)_{s,u}
       \bigr)
       \notag\\
       &+
    \int_s^t
       dZ_m(u)\otimes I_{q-1}(\gamma)_{s,u}.
    \label{eq:iterated-integral-difference}
\end{align}
The first term is bounded by
\[
    M
    \sup_{s\leq u\leq t}
    \bigl\|
       I_{q-1}(\gamma_m)_{s,u}
       -
       I_{q-1}(\gamma)_{s,u}
    \bigr\|,
\]
which tends uniformly to zero by the induction hypothesis.

For the second term, put
\[
    Y_s(u)=I_{q-1}(\gamma)_{s,u}.
\]
Riemann--Stieltjes integration-by-parts gives
\begin{equation}
    \int_s^t dZ_m(u)\otimes Y_s(u)
    =
    Z_m(t)\otimes Y_s(t)
    -
    \int_s^t Z_m(u)\otimes dY_s(u),
    \label{eq:iterated-integral-integration-by-parts}
\end{equation}
because \(Y_s(s)=0\).  Therefore,
\begin{equation}
    \left\|
       \int_s^t dZ_m\otimes Y_s
    \right\|
    \leq
    \|Z_m\|_\infty
    \left(
       \|Y_s(t)\|
       +
       \Var(Y_s;[s,t])
    \right).
    \label{eq:iterated-integral-ibp-bound}
\end{equation}
By \eqref{eq:ordered-tensor-variation-estimates} and
Lemma~\ref{lem:limit-BV},
\begin{equation}
  \|Y_s(t)\|+\Var(Y_s;[s,t])
  \leq
  \frac{2\Var(\gamma;[s,t])^{q-1}}{(q-1)!}
  \leq
  \frac{2M^{q-1}}{(q-1)!}.
  \label{eq:iterated-integral-uniform-parenthesis-bound}
\end{equation}
This is uniform in \(s,t\).  Since
\[
    \|Z_m\|_\infty
    =
    \|\gamma_m-\gamma\|_\infty
    \longrightarrow0,
\]
the second term in \eqref{eq:iterated-integral-difference} also tends
uniformly to zero.  This completes the induction.
\end{proof}

\begin{proof}[Proof of Proposition~\ref{prop:pathwise-CF-continuity}]
Lemma~\ref{lem:limit-BV} gives that \(\gamma\) is a continuous BV path.  Lemma~\ref{lem:iterated-continuity} gives uniform convergence of every ordered tensor integral \(I_q(\gamma_j)_{0,u}\) to \(I_q(\gamma)_{0,u}\), uniformly in \(u\).  Applying the continuous left-nested bracket map gives uniform convergence of each \(T_q\) along the restricted paths \([0,u]\).

Proceed by induction in homogeneous depth.  The first component is the endpoint increment and converges uniformly.  At each later depth, the Chacon--Fomenko recursion expresses \(H_{k+1}\) as a finite linear combination of the already controlled components, the ordered bracket integrals, and continuous nested Lie brackets.  The induction is finite because the Lie algebra is nilpotent.  Thus, every homogeneous logarithmic component converges uniformly in \(u\).  Continuity of the exponential map on the compact set containing these uniformly bounded logarithms gives uniform convergence of the developed paths.
\end{proof}

\end{document}